\documentclass[reqno,11pt]{amsart}
\usepackage[left=1in,right=1in, bottom=1in, top=1in]{geometry}
\usepackage{amssymb, amsthm}
\usepackage{amsmath}
\usepackage[numbers]{natbib}
\usepackage{mathtools}
\usepackage[dvipsnames]{xcolor}
\usepackage{enumerate}

\input epsf
\theoremstyle{plain}
\newtheorem{thm}{Theorem}[section]
\newtheorem{lem}[thm]{Lemma}

\newtheorem{prop}[thm]{Proposition}

\theoremstyle{definition}

\newtheorem{defn}[thm]{Definition}
\newtheorem{quest}[thm]{Question}

\newtheorem{rem}[thm]{Remark}

\usepackage{setspace}
\DeclareMathOperator{\ssm}{\smallsetminus}

\DeclareMathOperator{\A}{\mathcal{A}}
\DeclareMathOperator{\lk}{lk}
\DeclareMathOperator{\al}{\alpha}
\DeclareMathOperator{\be}{\beta}

\DeclareMathOperator{\id}{Id}

\DeclareMathOperator{\N}{\mathbb{N}}
\DeclareMathOperator{\Z}{\mathbb{Z}}

\DeclareMathOperator{\Q}{\mathbb{Q}}

\DeclareMathOperator{\spa}{span}

\DeclareMathOperator{\Bl}{Bl}

\usepackage{xspace}
\usepackage{hyperref}

\makeatletter
\DeclareRobustCommand\onedot{\futurelet\@let@token\@onedot}
\def\@onedot{\ifx\@let@token.\else.\null\fi\xspace}
\def\eg{{e.g}\onedot} 
\def\ie{{i.e}\onedot}

\makeatother

\begin{document}

\title{Branched covers bounding rational homology balls}

\overfullrule5mm
\author[P.\ Aceto]{Paolo Aceto}
\address{Mathematical Institute, University of Oxford, Oxford, United Kingdom}
\email{paoloaceto@gmail.com}

\author[J.\ Meier]{Jeffrey Meier}
\address{Department of Mathematics,  Western Washington University, Bellingham, WA, United States}
\email{jeffrey.meier@wwu.edu}

\author[A.\ N.\ Miller]{Allison N. Miller}
\address{Department of Mathematics, Rice University, Houston, TX, United States}
\email{allison.miller@rice.edu}

\author[M.\ Miller]{Maggie Miller}
\address{Department of Mathematics, Princeton University, Princeton, NJ, United States}
\email{maggiem@math.princeton.edu}

\author[J.\ Park]{JungHwan Park}
\address{School of Mathematics, Georgia Institute of Technology, Atlanta, GA, United States}
\email{junghwan.park@math.gatech.edu }

\author[A.\ I.\ Stipsicz]{Andr\'as I. Stipsicz}
\address{R\'{e}nyi Institute of Mathematics, Budapest, Hungary}
\email{stipsicz.andras@renyi.hu}

\begin{abstract}
Prime power fold cyclic branched covers along smoothly slice knots
all bound rational homology balls. This phenomenon, however, does not
characterize slice knots. In this paper, we give {a new construction} of non-slice
knots that have the above property. The sliceness obstruction comes from computing twisted Alexander polynomials, and we introduce new techniques to simplify their calculation.
\end{abstract}
\maketitle

\section{Introduction}
\label{sec:intro}

For a knot $K\subset S^3$, let $\Sigma _q (K)$ denote the $q$-fold
cyclic branched cover of $S^3$ along~$K$. Consider the set of prime
powers ${\mathcal {Q}}=\{ p^\ell \mid p {\mbox { prime, }} \ell \in \N
\}$.  For $q\in\mathcal{Q}$, the three-manifold~$\Sigma _q(K)$ is a
rational homology sphere -- \ie $H_*(\Sigma _q (K); \Q)\cong H_* (S^3
; \Q)$. It is not hard to see that if $K\subset S^3$ is smoothly slice
-- \ie bounds a smooth, properly embedded disk~$D$ in the 4-ball $D^4$
-- then~$\Sigma _q(K)$ bounds a smooth rational homology ball $X^4$,
that is, $\Sigma _q(K)=\partial X^4$ and $H_*(X^4; \Q)\cong H_* (D^4 ;
\Q)$. Indeed, the $q$-fold cyclic branched cover of $D^4$ branched
along $D$ will be such a four-manifold. It is natural to ask if the
property that all prime power fold cyclic branched covers bound
rational homology balls characterizes slice knots (see \eg
\cite{banff-questions,bonn-questions}).

To put this question in a more algebraic framework, notice that
$\Sigma _q \left(-K\right)=-\Sigma _q(K)$ (where $-K$ is the reverse of the mirror image of the knot $K$ and $-Y$ is the
three-manifold $Y$ with reversed orientation) and $\Sigma _q (K_1\#
K_2)=\Sigma _q (K_1)\# \Sigma _q (K_2)$. Hence the map
\[
K\mapsto \Sigma _q (K)
\]
descends to a homomorphism ${\mathcal {C}}\to \Theta ^3 _{\Q}$, where 
${\mathcal {C}}$ denotes the smooth concordance group of knots in~$S^3$, and 
$\Theta ^3 _{\Q}$ is the smooth rational homology cobordism group
of rational homology spheres. We then let   
$$
\varphi \colon   {\mathcal {C}}\to \prod _{q\in {\mathcal {Q}}}\Theta ^3_{\Q},
$$
be the homomorphism given by 
\[ [K]\mapsto   ([\Sigma _q(K)])_{q\in {\mathcal {Q}}}, \]
and note that  $[K] \in \ker \varphi$ exactly when all the  prime power fold cyclic branched covers of $K$ bound rational homology balls. In this article, we give a new construction that yields large families of knots representing elements in $\ker\varphi$.
 
If $K$ is a knot that is \emph{not} concordant to its reverse $K^r$, then $K\#-K^r$ is non-slice and represents a non-trivial element in $\ker\varphi$, since $\Sigma_q(K\#-K^r)\cong\Sigma_q(K)\#-\Sigma_q(K)$ always bounds a rational homology ball when $q\in\mathcal Q$.
 The existence of such knots was first shown by Livingston; see~\cite{Kirk-Livingston:1999-2,Kirk-Livingsont:2001-1} for proofs.
 In particular, recent work of Kim and Livingston implies that $\ker\varphi$ contains an infinite free subgroup generated by topologically slice knots of the form $K\#-K^r$~\cite{Kim-Livingston:2019-1}.
 
Considerably less seems to be known with regards to finite order elements in $\ker\varphi$. Kirk and Livingston showed that the knot $8_{17}$, which is negative-amphichiral, is not concordant to its reverse; hence $8_{17}\#8_{17}^r$ represents a nontrivial element of order two in $\ker\varphi$~\cite{Kirk-Livingston:1999-2}; see also,~\cite{ColKirLiv_15}. In the present article, we extend this result by showing that there exists a subgroup $H$ of $\ker\varphi$ such that $H$ is isomorphic to $(\Z_2)^5$; see Theorem~\ref{thm:main2} below.
%


Our examples are constructed as follows. Let $L_r$ be the link depicted in the left diagram of Figure~\ref{fig:knot}, where the box
labeled $r\in \N $ consists of $r$ right-handed half-twists (and $-r$
denotes $r$ left-handed half-twists).  When $r$ is even, $L_r$ is a
knot (a simple generalization of the figure-8 knot, which is given by
$L_2$). As was shown in~\cite{Cha}, these knots are rationally slice,
non-slice, and strongly negative-amphichiral and moreover generate a
subgroup isomorphic to $(\Z _2)^{\infty}$ in the smooth concordance
group ${\mathcal {C}}$.  If $r=2m+1$ is odd, then $L_r$ is a
2-component link of unknots, which we redraw in the middle of
Figure~\ref{fig:knot} by braiding component $B_{2m+1}$ about
component~$A_{2m+1}$. The resulting $(2m+1)$-braid $\beta _m$ is shown
in the right diagram of Figure~\ref{fig:knot}.

\begin{figure}[h]
\includegraphics[width=12cm]{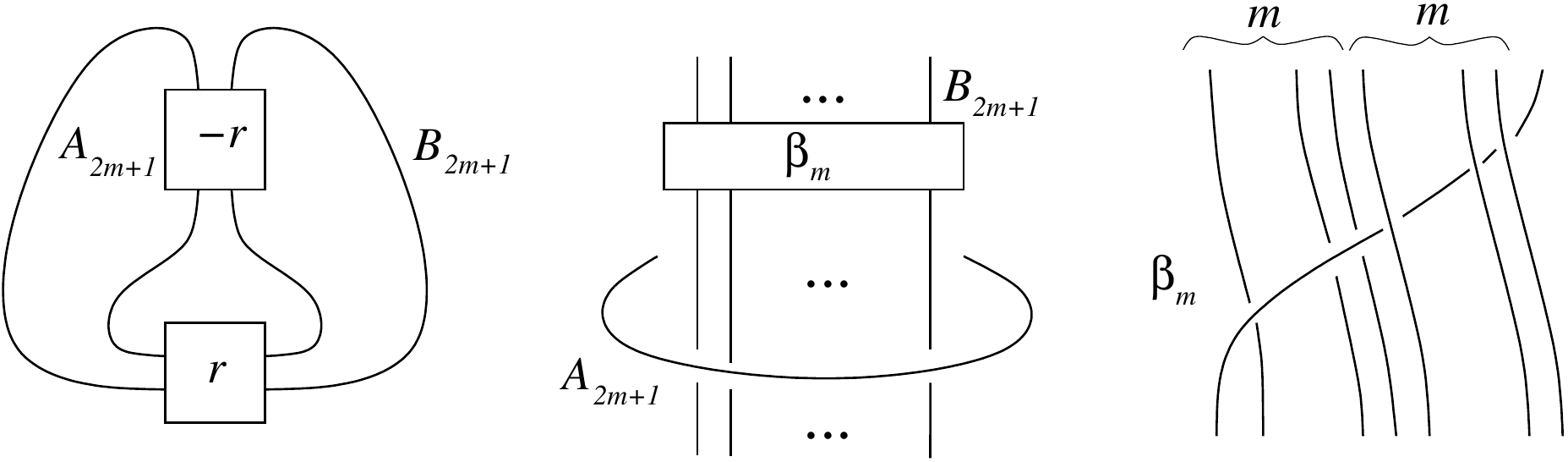}
\caption{$L_r$ (left) is a knot if $r$ is even and is a 2-component link if $r=2m+1$ is odd.
The middle diagram shows $L_{2m+1}=A_{2m+1} \cup B_{2m+1}$ redrawn as (the closure of) a $(2m+1)$-braid with its braid axis. On the right we give the $(2m+1)$-braid $\beta _m$.}
\label{fig:knot}
\end{figure}

We define $K_{m,n}$ to be the lift of $B_{2m+1}$ to
$\Sigma_n(A_{2m+1})$, which since $A_{2m+1}$ is an unknot is just
$S^3$.  Note that $K_{m,n}$ is a knot if $r=2m+1$ and $n$ are
relatively prime. In fact, the description of Figure~\ref{fig:knot}
shows that $K_{m,n}$ is simply the braid closure of the braid $\beta
_m ^n$. {We use the symmetry of $L_{2m+1}$ to show that
  $\Sigma_q(K_{m,n})$ is diffeomorphic to $\Sigma_n(K_{m,q})$ when $n$
  and $q$ are both relatively prime to $2m+1$. We then use the fact
  that $K_{m,n}$ is strongly negative-amphichiral to show that many of
  these knots represent elements of $\ker \varphi$.}

\begin{thm}\label{thm:main1}
If $n$ is an odd prime power which is relatively prime to $2m+1$, then
$[K_{m,n}]\in\ker \varphi.$
\end{thm}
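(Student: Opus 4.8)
The plan is to show that for each prime power $q \in \mathcal{Q}$, the branched cover $\Sigma_q(K_{m,n})$ bounds a rational homology ball. Following the strategy outlined after Figure~\ref{fig:knot}, I would split into two cases according to whether $\gcd(q, 2m+1) = 1$.

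\textbf{Case 1: $q$ is coprime to $2m+1$.} Here I invoke the key symmetry statement promised in the excerpt: since $n$ and $q$ are both coprime to $2m+1$, there is a diffeomorphism $\Sigma_q(K_{m,n}) \cong \Sigma_n(K_{m,q})$. The proof of this should go by realizing both three-manifolds as iterated branched covers of $S^3$ along the two-component link $L_{2m+1} = A_{2m+1} \cup B_{2m+1}$: taking the $n$-fold cover branched along $A_{2m+1}$ turns $B_{2m+1}$ into $K_{m,n}$ (by construction), and then taking the $q$-fold cover branched along $K_{m,n}$ should be the same as first taking the $q$-fold cover branched along $B_{2m+1}$ (turning $A_{2m+1}$ into $K_{m,q}$) and then the $n$-fold cover branched along that lift — the commuting of these two covering operations is exactly where coprimality of the branching orders with $2m+1$ is used, to guarantee that the relevant homology classes and meridians behave correctly so that the lifts are connected and the covers commute. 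Once I have $\Sigma_q(K_{m,n}) \cong \Sigma_n(K_{m,q})$, I note that $K_{m,q}$ is strongly negative-amphichiral (this is inherited from the strong negative amphichirality of $L_{2m+1}$, which Cha established, together with the fact that $K_{m,q}$ is a lift in this symmetric picture — I would check the symmetry of $L_{2m+1}$ descends appropriately). A strongly negative-amphichiral knot $J$ satisfies $\Sigma_n(J) \cong -\Sigma_n(J)$, and a rational homology sphere $Y$ with $Y \cong -Y$ bounds a rational homology ball (the mapping cylinder / doubling construction: $Y \times [0,1]$ glued to itself via the orientation-reversing diffeomorphism, or equivalently observe $Y \# -Y = Y \# Y$ bounds and then use the half-lives-half-dies argument — more carefully, one uses that $\Sigma_n(J)$ with its symmetry bounds the mapping torus region). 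Hence $\Sigma_q(K_{m,n}) \cong \Sigma_n(K_{m,q})$ bounds a rational homology ball.

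\textbf{Case 2: $q$ divides a power related to $2m+1$, i.e. $\gcd(q, 2m+1) > 1$.} Since $q = p^\ell$ is a prime power, $\gcd(q,2m+1)>1$ forces $p \mid 2m+1$. In this case the symmetry argument above does not directly apply, so I would instead argue that $K_{m,n}$ itself is strongly negative-amphichiral (again inherited from $L_{2m+1}$: the strong negative amphichirality of $L_{2m+1}$ as a link, combined with the construction of $K_{m,n}$ as $\widehat{\beta_m^n}$ and the behavior of $\beta_m$ under the symmetry, should show $K_{m,n}$ is itself strongly negative-amphichiral for all $n$ coprime to $2m+1$). Then for \emph{every} prime power $q$, including those with $p \mid 2m+1$, we have $\Sigma_q(K_{m,n}) \cong -\Sigma_q(K_{m,n})$, which as above implies it bounds a rational homology ball. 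In fact this observation subsumes Case 1 at the level of the conclusion, but the diffeomorphism $\Sigma_q(K_{m,n}) \cong \Sigma_n(K_{m,q})$ is the conceptually important tool (and is presumably needed elsewhere), so I would keep both threads.

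\textbf{Main obstacle.} The delicate point is establishing the diffeomorphism $\Sigma_q(K_{m,n}) \cong \Sigma_n(K_{m,q})$ and, relatedly, verifying that $K_{m,n}$ genuinely inherits strong negative amphichirality from $L_{2m+1}$. The commuting-covers argument requires care: one must check that the lift of the branch locus is connected (so that one really gets the branched cover along a \emph{knot}, not a multi-component link), which is where $\gcd(n, 2m+1) = 1$ and $\gcd(q,2m+1)=1$ enter, and one must track orientations through the symmetry of $L_{2m+1}$ so that the induced involution on $\Sigma_n(A_{2m+1}) = S^3$ carries $K_{m,n}$ to its orientation-reversed mirror. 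I expect the bookkeeping with the braid $\beta_m$ and its image under the amphichiral involution — showing $\beta_m^n$ closes up to a strongly negative-amphichiral knot — to be the technically fussiest step, whereas the final implication (symmetric rational homology sphere bounds a rational homology ball) is standard.
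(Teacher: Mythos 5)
Your proposal correctly identifies the two key ingredients — the switch diffeomorphism $\Sigma_q(K_{m,n})\cong\Sigma_n(K_{m,q})$ and the strong negative-amphichirality of $K_{m,n}$ — but the final implication you rely on is not valid as stated, and this is exactly the point the paper's proof is structured to avoid.

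The gap is the claim that a rational homology sphere $Y$ with $Y\cong -Y$ bounds a rational homology ball. Your justification (``$Y\times[0,1]$ glued to itself via the orientation-reversing diffeomorphism,'' a twisted $I$-bundle) requires that the diffeomorphism be a \emph{free} involution; only then does the quotient $I$-bundle give a smooth $4$-manifold with boundary $Y$. But the orientation-reversing involution $\hat\tau$ on $\Sigma_q(K_{m,n})$ induced by the strongly negative-amphichiral symmetry of $K_{m,n}$ is never free: the two fixed points of $\tau$ on $S^3$ lie on $K_{m,n}$, hence on the branch locus, so they lift to fixed points of $\hat\tau$ in $\Sigma_q(K_{m,n})$. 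Without freeness the mapping-cylinder construction collapses. (In general $Y\cong -Y$ only gives $2[Y]=0$ in $\Theta^3_\Q$, not $[Y]=0$; the ``half-lives-half-dies'' remark does not repair this.) So both your Case 1 and Case 2 rest on an unjustified step.

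The paper's route around this: rather than trying to cap off $\Sigma_q(K_{m,n})$ directly, it uses Kawauchi's lemma (Lemma~\ref{lem:ratslice}) applied to the $0$-surgery $M_{K_{m,n}}$. There the restriction of $\tau$ \emph{is} free — removing $\nu(K)$ removes the two fixed points, and $\tau$ has no fixed points on $\partial\nu(K)$ — so the twisted $I$-bundle over $M_{K_{m,n}}/\hat\tau$ is a genuine $4$-manifold, and gluing it to the surgery trace produces a rational homology ball $X$ with $H_1(X;\Z)\cong\Z_2$ in which $K_{m,n}$ is slice (Proposition~\ref{prop:ratslice}). Casson--Gordon's Lemma~\ref{lem:cglemma} then shows $\Sigma_q(K_{m,n})$ bounds a rational homology ball for every \emph{odd} prime power $q$, with no coprimality hypothesis needed. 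The correct case split is therefore $q$ odd versus $q=2^\ell$: Casson--Gordon needs $q$ odd (and, crucially, the $2$-torsion control on $H_1(X)$), and only the remaining case $q=2^\ell$ requires the switch diffeomorphism, for which coprimality with the odd number $2m+1$ is automatic. Your split into $\gcd(q,2m+1)=1$ versus $>1$ misses that the real constraint is parity of $q$, not coprimality.
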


For instance, if $n$ is an odd prime power and not divisible by $3$,
then $K_{1,n}$ is contained in $\ker \varphi$. The knots $K_{1,n}$
previously appeared in work of Lisca~\cite{lisca}, where it was
pointed out that these knots are strongly
negative-amphichiral. Therefore they are of order at most two in
$\mathcal{C}$. In addition, Sartori proved in his
thesis~\cite{Sartori} that one of these knots ($K_{1,7}$ in our
notation) is not slice; hence, by Theorem~\ref{thm:main1}, this knot spans $\Z_2 \leq \ker \varphi$. We extend Sartori's non-sliceness result to show that some other members of the family
represent non-trivial elements in $\ker \varphi$; moreover, we show
that representatives of these members are linearly independent. Let
$K_n$ denote $K_{1,n}$, \ie\ the closure of the three-braid
$(\beta_1)^n:=\left(\sigma_1 \sigma_2^{-1}\right)^n$ and let $J:=8_{17}\#8_{17}^r$. Recall that $8_{17}$ is negative-amphichiral and not concordant to its reverse~\cite{Kirk-Livingston:1999-2}.

\begin{thm}\label{thm:main2} The subgroup generated by $K_7, K_{11}, K_{17}, K_{23},$ and $J$ is isomorphic to $(\Z_2)^5 \leq \ker \varphi$.
\end{thm}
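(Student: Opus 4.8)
The plan is to proceed in three stages: verify that all five generators lie in $\ker\varphi$ and are $2$-torsion, reduce the resulting linear-independence problem to five one-knot statements by a primary-decomposition argument, and then prove those five statements by computing twisted Alexander polynomials. For the first stage: each $K_n$ lies in $\ker\varphi$ by Theorem~\ref{thm:main1}, and $J=8_{17}\#8_{17}^r$ does too because $8_{17}$ is negative-amphichiral, so $\Sigma_q(8_{17})$ admits an orientation-reversing self-diffeomorphism and hence $\Sigma_q(J)\cong\Sigma_q(8_{17})\#\bigl(-\Sigma_q(8_{17})\bigr)$ bounds a rational homology ball for every $q\in\mathcal Q$. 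Each generator has order dividing two in $\mathcal C$ --- strong negative-amphichirality for the $K_n$, and negative-amphichirality of $8_{17}$ and $8_{17}^r$ for $J$ --- so $H$ is a quotient of $(\Z_2)^5$, and $H\cong(\Z_2)^5$ precisely when the five classes are linearly independent over $\mathbb F_2$, i.e.\ when none of the $2^5-1$ nonempty connected sums $\#_{g\in S}g$ (over nonempty $S\subseteq\{K_7,K_{11},K_{17},K_{23},J\}$) is slice. That $[J]\neq 0$ is Kirk--Livingston's theorem~\cite{Kirk-Livingston:1999-2} recalled above, so the case $S=\{J\}$ is known; the remaining content is the other $30$ connected sums.

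The reduction step uses the primary decomposition of linking forms. Assign primes: $29$ to $K_7$, $199$ to $K_{11}$, $3571$ to $K_{17}$, $139$ to $K_{23}$, and $37$ to $J$. One computes that $|H_1(\Sigma_2(K_n))|=\det(K_n)=L_n^2$, the square of the $n$th Lucas number, with set of prime divisors $\{29\},\{199\},\{3571\},\{139,461\}$ for $n=7,11,17,23$, while $|H_1(\Sigma_2(J))|=\det(8_{17})^2=37^2$; these five sets of primes are pairwise disjoint. Now suppose $\#_{g\in S}g$ is slice and fix any $g_0\in S$ with assigned prime $p$. Then $\#_{g\in S}\Sigma_2(g)$ bounds the rational homology ball obtained as the double branched cover of a slice-disk exterior, so its linking form has a metabolizer $M$; since the $p$-primary part of $\bigoplus_{g\in S}H_1(\Sigma_2(g))$ is concentrated in the $g_0$-summand, the $p$-primary part of $M$ is a metabolizer of the $p$-primary linking form of $\Sigma_2(g_0)$. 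Pick a character $\chi$ of $p$-power order on $H_1(\Sigma_2(g_0))$ vanishing on that metabolizer, extended by zero to the whole group. The associated metabelian Casson--Gordon/twisted Alexander sliceness invariant is additive under connected sum and vanishes for the trivial character, so its value on $\#_{g\in S}g$ equals its value on $g_0$ with $\chi$; the sliceness hypothesis forces this to vanish. Hence it suffices to prove, for each single generator $g$ with assigned prime $p$: \emph{for every metabolizer of the $p$-primary linking form of $\Sigma_2(g)$, some character $\chi$ of $p$-power order vanishing on it has nonvanishing invariant} --- equivalently, $\Delta^\chi_g(t)$ is not a norm.

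It then remains to carry out five one-knot computations. For $J$ this is a twisted Alexander polynomial repackaging of Kirk--Livingston's argument that $8_{17}$ is not concordant to its reverse (at $p=37$), and for $K_7$ it is Sartori's computation~\cite{Sartori} (at $p=29$). The genuinely new cases are $K_{11},K_{17},K_{23}$ at $p=199,3571,139$. Here I would exploit that $K_n$ is the closure of the $3$-braid $(\sigma_1\sigma_2^{-1})^n$: the $\chi$-twisted Alexander invariants of $K_n$ are then read off from a single $2\times 2$ matrix over a group ring --- the $\chi$-twisted reduced Burau matrix of $\sigma_1\sigma_2^{-1}$, raised to the $n$th power --- which yields a compact closed form for $\Delta^\chi_{K_n}(t)$; this is the point of the ``new techniques to simplify the calculation.'' Strong negative-amphichirality of $K_n$ cuts the work down further, organizing the relevant characters into a small number of Galois orbits and constraining which cyclic subgroups are metabolizers, after which the non-norm condition is checked uniformly across the remaining $\chi$ --- for instance by comparing leading coefficients or degrees, or by evaluating $t$ at a specialization where norms are confined to a small set.

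The main obstacle will be this last step for $n=17$ and $n=23$, where the assigned prime is large and a head-on computation would involve twisted Alexander polynomials for thousands of characters with coefficients in a large cyclotomic field; the Burau-matrix closed form, together with the amphichiral symmetry and a single uniform norm obstruction, is precisely what should make the verification tractable, and the crux is to confirm that this machinery genuinely rules out a norm for \emph{every} metabolizer. A secondary but necessary technical point is the bookkeeping in the reduction step: identifying the linking form and its metabolizers on $H_1(\Sigma_2(K_n))$ --- which strong negative-amphichirality constrains (forcing $\det(K_n)$ to be a square) but does not by itself pin down --- and confirming the additivity of the obstruction for the mixed connected sums.
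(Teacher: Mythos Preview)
Your high-level strategy---primary decomposition of the branched-cover homology to decouple the summands, then twisted Alexander polynomial obstructions for the individual knots---is the paper's strategy. The implementation, however, is quite different, and your appeal to the literature contains a genuine gap.

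The paper works over $\Sigma_3$, not $\Sigma_2$. This is the key choice: one has $H_1(\Sigma_3(K_n);\Z)\cong(\Z_n)^4$ and $H_1(\Sigma_3(J);\Z)\cong(\Z_{13})^4$, so the separating primes are $7,11,13,17,23$ themselves, and the characters land in $\Z_n$ with $n\le 23$ (so the cyclotomic fields stay small). The paper then exploits the order-$n$ \emph{periodic} symmetry of $K_n$---the rotation by $2\pi/n$, not the negative-amphichiral involution you invoke---which lifts to $\Sigma_3(K_n)$ and collapses the covering-transformation-invariant metabolizers into exactly two orbits; thus only two polynomials per knot are computed, and each is reduced modulo a small auxiliary prime $s\equiv 1\pmod n$ to check non-factorization. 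For $J$ the paper cites Kirk--Livingston, whose computation was already at $q=3$, $p=13$; for $K_7$ it cites only Sartori's bare non-sliceness (if $a_{11}=a_{13}=a_{17}=a_{23}=0$ then $a_7K_7$ slice forces $a_7=0$), sidestepping a $K_7$ metabolizer analysis entirely.

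Your $\Sigma_2$ route is not a priori wrong, but the reductions you claim to the literature fail. Sartori's obstruction for $K_7$ uses $q=7$ with characters to $\Z_{13}$, and Kirk--Livingston's for $J$ uses $q=3$ with characters to $\Z_{13}$; neither is a $q=2$ statement at $p=29$ or $p=37$. So in your scheme \emph{all five} knots, not three, require new computations, and with characters of order up to $3571$ the coefficient field $\Q(\xi_{3571})$ is enormous compared with the paper's $\Q(\xi_{23})$. Your proposed simplifications---the twisted Burau closed form and the amphichiral symmetry organizing characters---are plausible heuristics but are not what the paper's ``new techniques'' are (those are the rotational-symmetry reduction of metabolizers and the Seifert-to-Wirtinger dictionary in the appendix), and you would still need to actually carry out and verify the non-norm checks, which is the entire content of the theorem.
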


In general, using twisted Alexander polynomials to show that a fixed
knot $K$ is not slice is not so much technically difficult as
computationally intense.  Delaying all technical definitions to
Section~\ref{sec:twisted}, we say merely that in this context twisted
Alexander polynomials are associated to a choice of $q \in
\mathcal{Q}$ and a map $\chi \colon H_1(\Sigma_q(K);\Z) \to \Z_d$ for
some $d$. In order to use twisted Alexander polynomials to obstruct a
knot $K$ from being slice, one must show that for every subgroup $M$
of $H_1(\Sigma_q(K);\Z)$ satisfying certain algebraic properties there
exists a map $\chi$ vanishing on $M$ such that the resulting twisted
Alexander polynomial does not factor in a certain way.

By better understanding the structure of $H_1(\Sigma_q(K);\Z)$ one can sometimes significantly reduce the number of computations that are necessary. For example, Sartori's result of~\cite{Sartori} that $K_{7}$ is not slice requires the computation (and subsequent obstruction of factorization as a norm) of 170 different twisted Alexander polynomials, corresponding to order 13 characters vanishing on the 130 different square root order subgroups
of $H_1(\Sigma_7(K_{7});\Z)$. By careful consideration of the linking
form on $H_1(\Sigma_3(K_{n});\Z)$ and how its metabolizers are permuted
by the induced action of order $n$ symmetry of $K_{n}$, we are able to
prove that $K_{n}$ is not slice by computing only two twisted Alexander
polynomials, at least for $n=11,17,23$. In fact, while we do not
include these computations here, we leave as a challenge for the
interested reader to reprove Sartori's result by following roughly the
same argument below, but computing precisely 3 carefully chosen
twisted Alexander polynomials corresponding to $\chi \colon
H_1(\Sigma_3(K_{7});\Z) \to \Z_7$.

In addition, we overcome the following technical difficulty, which may be of independent interest. In many settings, the easiest way to  compute the homology of a knot's cyclic branched cover, with its linking form and module structure, is in terms of some nice Seifert surface.   However, the standard efficient algorithms for computing the twisted Alexander polynomial corresponding to $\chi \colon H_1(\Sigma_q(K);\Z) \to \Z_d$ require one to compute a map $\phi_\chi \colon \pi_1(X_K) \to GL(q, \Q(\xi_d)[t^{\pm1}])$ on the Wirtinger generators for $\pi_1(X_K)$. Relating these two perspectives is not entirely trivial, and we refer the reader to Appendix A for a discussion of this process.

%
%
%
%
\begin{rem} One can ask an analogous question in the topological category: Is there a knot that does not bound any topologically locally flat disk in the 4-ball but all its prime power fold cyclic branched covers bound topological rational homology balls? It turns out that such examples can be constructed by using the \emph{classical} Alexander polynomial. Let $\{n_i\}$ be the set of all natural numbers divisible by at least 3 distinct primes and $K_{i}$ be a knot with Alexander polynomial the $n_i^{th}$ cyclotomic polynomial. By Livingston \cite{Livingston:2002-1}, for each $i$, all the prime power fold cyclic branched covers along $K_i$ are integral homology spheres. Hence, by Freedman \cite{Freedman:1982-1, Freedman-Quinn:1990-1}, they all bound topological contractible four-manifolds. On the other hand, since the cyclotomic polynomials are irreducible, $K_i$ and $K_j$ are concordant if and only if $i=j$. Hence the knots $\{K_i\}$ represent distinct elements in $\ker \varphi^{top}$, the topological analogue of $\ker \varphi$.
\end{rem}

The results discussed in this introduction show that slice knots are not characterized by the property that each of their prime power fold cyclic branched covers bound rational homology balls.  However, there is a stronger condition that one might posit as a characterization of sliceness.  When a knot is slice, not only do its covers bound rational homology balls, but the deck transformations of the covers extend over these balls.  (Similarly, the lifts of the slice knot to knots in the covers bound slicing disks in these balls.)  This leads us to the following question.

\begin{quest}\label{q:extend}
	\ 
	\begin{enumerate}
		\item 	Does there exist a non-slice knot $K$ such that $\Sigma_q(K)$ bounds a rational homology ball for each prime power $q$ such that the deck transformations of $\Sigma_q(K)$ extend over the rational homology ball?
		\item 	Does there exist a non-slice knot $K$ such that $\Sigma_q(K)$ bounds a rational homology ball for each prime power $q$ such that the lift of $K$ to $\Sigma_q(K)$ bounds a disk in the rational homology ball?
	\end{enumerate}
\end{quest}

We remark that each of the knots $K_{m,n}$ studied in this article, as well as any knot of the form $K\#-K^r$ where $K$ is negative-amphichiral, can be shown to have the desired properties of Question~\ref{q:extend}(1) when $q$ is odd or the deck transformation is an involution, and the desired properties of Question~\ref{q:extend}(2) when $q$ is odd.

{Lastly, we make a remark on some other sliceness obstructions for $K_n$, where as above $n$ is an odd prime power not divisible by 3.
 Note that $K_n$ is strongly positive-amphichiral hence it is algebraically slice~\cite{Long:1984-1}. Further, $K_n$ is also strongly negative-amphichiral, which implies that it is rationally slice. Hence the $\tau$-invariant~\cite{Ozsvath-Szabo:2003-1}, $\varepsilon$-invariant~\cite{Hom:2014-1}, $\Upsilon$-invariant~\cite{Ozsvath-Stipsicz-Szabo:2017-1}, $\Upsilon^2$-invariant~\cite{Kim-Livingston:2018-1}, $\nu^+$-invariant~\cite{Hom-Wu:2016-1}, $\varphi_j$-invariants~\cite{DHST}, and $s$-invariant~\cite{Rasmussen:2010-1} all vanish for $K_n$. Moreover, since $[K_n] \in \ker \varphi$, the sliceness obstructions from the Heegaard Floer correction term and Donaldson’s diagonalization theorem (\eg \cite{Greene-Jabuka:2011-1, Jabuka:2012-1, Lisca:2007-1, Manolescu-Owens:2007-1}) applied to the cyclic branched covers of $K_n$ all vanish. As mentioned above, the fact that the involution induced by the deck transformation on $\Sigma_2(K_n;\Z)$ extends to a rational homology ball (in fact it is a $\mathbb{Z}_2$ homology ball) implies that sliceness obstructions such as \cite{Alfieri-Kang-Stipsicz:2019-1, Dai-Hedden-Mallick:2020-1} vanish.}

The paper is organized as follows: in Section~\ref{sec:knots} we prove Theorem~\ref{thm:main1}, and in Section~\ref{sec:twisted} we use twisted Alexander polynomials to show Theorem~\ref{thm:main2}.

\subsection*{Acknowledgements:}
This project began during a break-out session during the workshop
\emph{Smooth concordance classes of topologically slice knots} hosted
by the American Institute for Mathematics in June 2019.  The authors
would like to extend their gratitude to AIM for providing such a
stimulating research environment.  PA is supported by the European
Research Council (grant agreement No 674978). JM is supported by NSF
grant DMS-1933019. ANM is supported by NSF grant DMS-1902880.  MM is
supported by NSF grant DGE-1656466.  JP thanks Min Hoon Kim and Daniel
Ruberman for helpful conversations. AS was supported by the
\emph{\'Elvonal} Grant NKFIH KKP126683 (Hungary).  Lastly, we thank
Charles Livingston for pointing out the relevance of knots which are
not concordant to their reverses to this article.

\section{Branched covers bounding rational homology balls}
\label{sec:knots}

In this section, we will prove Theorem~\ref{thm:main1} after establishing  the following two propositions. We work in the smooth category.

\begin{prop}\label{prop:switch}
Suppose that $n$ and $q$ are both relatively prime to $2m+1$. Then~$\Sigma _q(K_{m,n})$ and $\Sigma _n (K_{m,q})$ are diffeomorphic
three-manifolds.
\end{prop}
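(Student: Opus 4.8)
The key is to realize $\Sigma_q(K_{m,n})$ as an iterated/fiber-product-type cover of $S^3$ along the link $L_{2m+1} = A_{2m+1} \cup B_{2m+1}$, in a way that is visibly symmetric in the roles of $n$ and $q$. Recall that $K_{m,n}$ is by definition the preimage of $B_{2m+1}$ in the $n$-fold cyclic cover of $S^3$ branched along the unknot $A_{2m+1}$; since this $n$-fold branched cover is again $S^3$, we get a knot (when $\gcd(n,2m+1)=1$). The plan is to show that $\Sigma_q(K_{m,n})$ is the cover of $S^3$ that is simultaneously $n$-fold cyclic branched along $A_{2m+1}$ and $q$-fold cyclic branched along $B_{2m+1}$, and that this description is manifestly symmetric under swapping $(n, A) \leftrightarrow (q, B)$ because $L_{2m+1}$ itself has a symmetry exchanging its two components.

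\smallskip

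First I would set up the covering-space bookkeeping. Let $X = S^3 \ssm (A_{2m+1} \cup B_{2m+1})$ and consider the two obvious maps $\pi_1(X) \to H_1(X) \cong \Z\langle \mu_A \rangle \oplus \Z\langle \mu_B\rangle \to \Z_n \oplus \Z_q$ sending the two meridians to the two generators. Let $\widetilde{X} \to X$ be the corresponding $\Z_n \oplus \Z_q$ cover. The first step is to identify the branched-cover fill-in: filling $\widetilde X$ appropriately along the torus boundary components lying over $A_{2m+1}$ and over $B_{2m+1}$ produces a closed $3$-manifold $M$, and I claim $M \cong \Sigma_q(K_{m,n})$. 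Indeed, the intermediate cover of $X$ corresponding to $\pi_1(X) \to \Z_n$ (meridian of $A \mapsto 1$, meridian of $B \mapsto 0$), filled in over $A$, is $S^3$ with $B_{2m+1}$ lifting to $K_{m,n}$; then the further $\Z_q$ cover branched over that lift is exactly $\Sigma_q(K_{m,n})$ by transitivity of (branched) covers. Here one needs the elementary fact (which I would state and use) that $H_1$ of the $n$-fold cyclic branched cover over the unknot $A$ is still generated by (a lift of) the meridian of $B$, so that the $\Z_q$-cover downstairs pulls back to the $q$-fold branched cyclic cover of $K_{m,n}$ upstairs; the coprimality hypotheses guarantee that all the relevant homology classes survive and that the cover stays connected, so $K_{m,n}$ and $K_{m,q}$ are genuinely knots.

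\smallskip

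The second and decisive step is the symmetry. The construction of $M$ above is completely symmetric: it depends only on the pair $(L_{2m+1}, (n,q))$ together with a choice of which component gets which label. But Figure~\ref{fig:knot} (or a direct isotopy argument) exhibits an ambient diffeomorphism of $S^3$ carrying the ordered link $(A_{2m+1}, B_{2m+1})$ to $(B_{2m+1}, A_{2m+1})$ — this is the strong amphichirality/symmetry of the generalized figure-eight link being exploited elsewhere in the paper. Transporting the covering data through this diffeomorphism shows $M(L_{2m+1}; n \text{ on } A, q \text{ on } B) \cong M(L_{2m+1}; q \text{ on } A, n \text{ on } B)$. Combining with Step 1 applied to both orderings gives $\Sigma_q(K_{m,n}) \cong M \cong \Sigma_n(K_{m,q})$, which is the claim.

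\smallskip

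I expect the main obstacle to be Step 1 — making the identification $M \cong \Sigma_q(K_{m,n})$ fully rigorous, i.e. checking that the Dehn fillings that turn the punctured cover $\widetilde X$ into a closed manifold really are the meridional fillings that implement branched covering along \emph{both} $A$ and $B$ simultaneously, and that the order in which one fills does not matter. This is standard but fiddly: one must track the boundary tori of $\widetilde X$, compute their stabilizers under the $\Z_n \oplus \Z_q$ action (using $\gcd(n, 2m+1) = \gcd(q,2m+1)=1$ and the linking numbers $\mathrm{lk}(A,B)$ in $L_{2m+1}$, which is $\pm 1$, to see there is a single torus over each component with the right slope), and invoke the transitivity of branched covers. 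Once the covering-space picture is correctly set up, the symmetry argument in Steps 2–3 is immediate. An alternative, perhaps cleaner, route to Step 1 is purely diagrammatic: $K_{m,n}$ is the closure of $\beta_m^n$, its $q$-fold cyclic branched cover is obtained by a standard "cabling the braid axis" / Birman–Hilden type picture, and one reads off directly that the resulting manifold is the $n q$-periodic object whose description is symmetric in $n \leftrightarrow q$; I would likely present whichever of these two arguments turns out to require less notation.
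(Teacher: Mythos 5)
Your proposal is correct and takes essentially the same approach as the paper: the paper's terse argument that the iterated branched cover over $A$ then over the pullback of $B$ equals the same thing with the roles of $(A,n)$ and $(B,q)$ reversed is exactly your fiber-product / $\Z_n\oplus\Z_q$-cover picture plus the $A\leftrightarrow B$ symmetry, just without the bookkeeping spelled out. One small correction to your parenthetical in the last paragraph: $\lk(A_{2m+1},B_{2m+1})=\pm(2m+1)$, not $\pm 1$ (the component $B_{2m+1}$ is a closed $(2m+1)$-braid wrapping that many times around the axis $A_{2m+1}$); this is precisely why the hypothesis that $n$ and $q$ are coprime to $2m+1$ is needed, both so that $K_{m,n}$ and $K_{m,q}$ are knots and so that the preimage of each boundary torus in the $\Z_n\oplus\Z_q$-cover is connected, and with that substituted your Step~1 goes through as stated.
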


\begin{proof}
We can realize $\Sigma_q(K_{m,n})$ by first taking the
$n$-fold cyclic branched cover  of $S^3$ branched along $A_{2m+1}$ and then the
$q$-fold cyclic branched cover branched along the pull-back of $B_{2m+1}$ of
Figure~\ref{fig:knot}. Since the roles of $A_{2m+1}$ and $B_{2m+1}$ are symmetric
(as shown by the left diagram of Figure~\ref{fig:knot}), this
three-manifold is the same as the $q$-fold cyclic branched cover branched
along $A_{2m+1}$, followed by the $n$-fold cyclic branched cover branched along 
the pull-back of $B_{2m+1}$, which is exactly $\Sigma _n (K_{m,q})$, concluding 
the argument.
\end{proof}

\begin{prop}\label{prop:ratslice}
Suppose that $n$ is relatively prime to $2m+1$. Then $K_{m,n}$ bounds a disk in a rational homology ball $X_{m,n}$ with only 2-torsion in $H_1(X_{m,n}; \Z )$.
\end{prop}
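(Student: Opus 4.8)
The plan is to exploit the strong negative-amphichirality of the link $L_{2m+1}$, or more precisely the symmetry swapping its two components, to produce an explicit rational homology ball bounded by $\Sigma_n(A_{2m+1}) = S^3$ in which $K_{m,n}$ bounds a disk. Recall that $K_{m,n}$ is the lift of $B_{2m+1}$ to the $n$-fold cyclic branched cover of $S^3$ along the unknot $A_{2m+1}$. The key structural input is that $L_{2m+1} = A_{2m+1} \cup B_{2m+1}$ admits an involution $\tau$ of $S^3$ that reverses orientation and interchanges the two components (this is the negative-amphichiral symmetry of $L_r$ observed in \cite{Cha,lisca}); in particular $B_{2m+1}$ is itself an unknot and the pair $(S^3, L_{2m+1})$ has a large symmetry group. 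The construction I would use is the standard one for producing rational homology balls from such symmetries: pass to the branched cover and take a quotient, or equivalently build a mapping-cylinder-type 4-manifold from the symmetry.

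Concretely, first I would set $Y_n := \Sigma_n(A_{2m+1}) = S^3$ with the lifted knot $K_{m,n} \subset Y_n$, and note that the deck group $\Z_n$ acts on $(Y_n, K_{m,n})$. Next, consider the 4-manifold $W$ obtained as the $n$-fold cyclic branched cover of $D^4$ branched along a properly embedded disk $\Delta$ bounded by $A_{2m+1}$ (such a $\Delta$ exists since $A_{2m+1}$ is unknotted, and $W \cong D^4$). Inside $W \cong D^4$, the preimage of $B_{2m+1}$ (pushed into $D^4$ using the product structure) is a copy of $K_{m,n} \times [0,1]$ near the boundary, so this does not yet bound a disk. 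The actual source of the rational homology ball must instead come from the \emph{ambient} symmetry: since $\tau$ swaps $A_{2m+1}$ and $B_{2m+1}$, the pair $(S^3, B_{2m+1})$ is isotopic to $(S^3, A_{2m+1})$, hence there is a disk $D_{B} \subset D^4$ with $\partial D_B = B_{2m+1}$ that is disjoint from a chosen disk $D_A$ bounded by $A_{2m+1}$ — indeed $A_{2m+1} \sqcup B_{2m+1}$ bounds a disjoint union of disks because the link is a split-looking 2-component unlink-like object (more carefully: one checks that $L_{2m+1}$ is a boundary link, or directly that the two unknots cobound disjoint disks in $D^4$). Then $X_{m,n} := \Sigma_n(D^4, D_A)$ with the branch locus removed-and-replaced appropriately contains the lift of $D_B$, which is a properly embedded disk bounded by $K_{m,n}$; and $H_1(X_{m,n};\Z)$ is computed from the Alexander module of $A_{2m+1}$ (trivial, since $A_{2m+1}$ is unknotted) — so this particular construction gives $X_{m,n} \cong D^4$, which is too strong and signals that I have the wrong disk $D_A$.

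The correct approach, and the one I would actually carry out, is to use the symmetry of $L_{2m+1}$ as a \emph{strongly invertible} or strongly negative-amphichiral configuration to run Cha's rational-slice argument: $L_{2m+1}$ being strongly negative-amphichiral means there is an orientation-reversing involution of $S^3$ fixing the link setwise with one-dimensional fixed set meeting it; quotienting $S^3$ by an appropriate extension over $D^4$ produces a rational homology ball. Specifically, the involution $\tau$ on $(S^3,L_{2m+1})$ extends to $(D^4, \mathcal{L})$ for a properly embedded annulus/surface, and the branched cover $\Sigma_n$ of this configuration, quotiented by the $\Z_n$-action, yields $X_{m,n}$ with $H_1(X_{m,n};\Z)$ a finite abelian group; the 2-torsion claim follows because the relevant homology is governed by the $\Z_2$-homology of a quotient orbifold (the involution $\tau$ contributes only 2-torsion, while the $\Z_n$-branched-cover part contributes nothing new to $H_1$ since $n$ is coprime to $2m+1$ and the linking data lives in $\Z_{2m+1}$-torsion which is killed). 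I would make this precise by: (i) writing down $\tau$ explicitly on the symmetric diagram of $L_{2m+1}$; (ii) extending $\tau$ over $D^4$ with quotient $D^4$ or a rational ball, carrying $\mathcal{L}$ to a properly embedded disk bounded by the image of $B$; (iii) taking the $n$-fold branched cover and identifying the boundary as $\Sigma_n(A_{2m+1}) = S^3$ with the lifted disk bounding $K_{m,n}$; (iv) running a Mayer–Vietoris / transfer argument to pin down $H_1(X_{m,n};\Z)$ and show it is 2-torsion.

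The main obstacle will be step (ii)–(iv): carefully extending the symmetry over the 4-ball while simultaneously controlling the branch locus and then computing $H_1$ of the resulting cover. The delicate point is that two separate operations (quotienting by the amphichiral involution $\tau$, and taking the $\Z_n$-branched cover) must be interleaved compatibly, and one must verify that the only torsion surviving in $H_1(X_{m,n};\Z)$ is 2-torsion coming from the orientation-reversing involution — this requires knowing that the $n$-fold branched cover step contributes trivially to $H_1$, which in turn uses that $A_{2m+1}$ is an unknot so that $H_1(\Sigma_n(A_{2m+1})) = 0$, together with an equivariant version of this vanishing for the 4-dimensional cover. Getting the bookkeeping of the two group actions right, and invoking the correct equivariant half-lives-half-dies / transfer statements to conclude the 2-torsion bound, is the technical heart of the proof.
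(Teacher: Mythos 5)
Your proposal eventually circles to the right family of ideas (strong negative-amphichirality plus a Kawauchi-style quotient of the 0-surgery trace), but the concrete route you lay out is both more complicated than necessary and, as you yourself admit, not actually carried out. The missing observation is disarmingly simple and is the entire content of the paper's proof: the knot $K_{m,n}$ \emph{itself}, viewed in $S^3 = \Sigma_n(A_{2m+1})$, is strongly negative-amphichiral. This is visible directly from the symmetric closed-braid diagram of $\widehat{\beta_m^n}$ (the paper's Figure~\ref{fig:sna}): placing the center of the braid-axis diagram at the origin, the antipodal map $v \mapsto -v$ on $\mathbb{R}^3 \subset S^3$ is an orientation-reversing involution carrying $K_{m,n}$ to itself with fixed set $S^0 \subset K_{m,n}$. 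Once this is seen, Lemma~\ref{lem:ratslice} (the Kawauchi construction: quotient the boundary of the 0-surgery trace by the free extension $\hat\tau$ of $\tau$, check $H_1 \cong \Z_2$) applies verbatim and immediately gives the rational homology ball with only 2-torsion. No branched-cover bookkeeping over $A_{2m+1}$ is needed at this stage at all.

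Your proposal, by contrast, tries to manufacture the amphichiral symmetry of $K_{m,n}$ by lifting a symmetry of the link $L_{2m+1}$ through the $\Z_n$-branched cover and then interleaving that cover with the involution quotient. This is where the gap is: you never actually identify the involution on $(S^3, K_{m,n})$, you never verify it has the required $S^0$ fixed set, and steps (ii)--(iv) of your plan --- extending the symmetry over $D^4$, taking the $n$-fold cover equivariantly, and running a transfer/Mayer--Vietoris computation --- are precisely the content you flag as "the technical heart" and leave undone. Moreover the first two paragraphs of your proposal pursue dead ends (disjoint slice disks for $A_{2m+1} \sqcup B_{2m+1}$ would wrongly show $K_{m,n}$ is slice, which it is not; the branched cover of $D^4$ along a disk for $A_{2m+1}$ gives only an annulus over $B_{2m+1}$), and while you correctly recognize these fail, they indicate you are looking for a 4-dimensional construction before you have located the necessary 3-dimensional symmetry. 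The fix is to drop the interleaving entirely, observe the symmetry of $K_{m,n}$ directly from its diagram, and invoke Kawauchi's lemma as a black box.
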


Recall that a knot is called \emph{rationally slice} if it bounds a smooth properly embedded disk in a rational homology ball and \emph{strongly negative-amphichiral} if there is an orientation-reversing involution~$\tau \colon S^3\to S^3$ such that $\tau (K)=K$  and the fixed point set of $\tau$ is a copy of $S^0\subset K$.

Proposition \ref{prop:ratslice} follows from the following lemma,
which is a special case of \cite{kawauchi}, together with a simple
observation regarding the knots $K_{m,n}$.

\begin{lem}[{\cite[Section 2]{kawauchi}}]
\label{lem:ratslice}
If $K$ is a strongly negative-amphichiral knot, then $K$ is slice in a rational homology ball $X$ with only 2-torsion in $H_1(X; \Z )$.
\end{lem}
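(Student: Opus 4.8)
The plan is to prove Lemma~\ref{lem:ratslice}: a strongly negative-amphichiral knot $K$ is slice in a rational homology ball $X$ with only $2$-torsion in $H_1(X;\Z)$. The key idea is to build $X$ directly from the involution $\tau$, using the trace of an isotopy exhibiting negative-amphichirality.

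First I would set up the geometry. Let $\tau\colon S^3 \to S^3$ be the orientation-reversing involution with $\tau(K)=K$ and $\mathrm{Fix}(\tau)=S^0 \subset K$; one of the two fixed points we can think of as $\infty$, so that removing it gives an involution of $\mathbb{R}^3$ fixing a single point of $K$. Consider the quotient orbifold $S^3/\tau$. Since $\tau$ is an orientation-reversing involution of $S^3$ with $0$-dimensional (in fact $S^0$) fixed set, its quotient is again $S^3$ (this is a standard fact: $\tau$ is conjugate to the antipodal-type map, but with isolated fixed points — more precisely one uses that an orientation-reversing smooth involution of $S^3$ with nonempty fixed set is standard after Smith theory / the resolution of the Smith conjecture-adjacent statements, or one simply builds the quotient by hand from the given symmetric diagram). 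The image $\bar K = K/\tau$ is an arc in $S^3/\tau = S^3$ with endpoints the two branch points. The double branched cover of this quotient $S^3$ along $\bar K \cup (\text{branch locus})$ recovers $(S^3,K)$.

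Next, the construction of $X$. The standard move (this is Kawauchi's argument, and also appears in work of others on rational sliceness of amphichiral knots): take $\bar K$, an unknotted-looking arc in the quotient, push it into $D^4$ to get a disk $\bar D$ (using that $\bar K$ together with the other arc of the branch circle forms an unknot in the quotient $S^3$, so it bounds an obvious disk), and let $X$ be the double branched cover of $D^4$ along $\bar D$. Then $K$, as the preimage of $\bar K$, bounds the preimage of $\bar D$, which is a disk in $X$. The content is the homology computation: $H_*(X;\Q) \cong H_*(D^4;\Q)$ because the transfer argument shows $H_*(X;\Q)$ splits as the $\pm1$ eigenspaces of the deck transformation, the $+1$ part being $H_*(D^4;\Q)$ and the $-1$ part being a twisted homology group that vanishes rationally — this is where one uses that the branch locus $\bar D$ is a disk (homologically trivial) in $D^4$, so the relevant Alexander-module-type obstruction is controlled. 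The sharper statement that $H_1(X;\Z)$ has only $2$-torsion follows because $X$ is a double (order $2$) branched cover: $H_1$ of a cyclic branched cover of $D^4$ along a properly embedded surface of genus/Betti controlled by the disk is annihilated by the order of the covering, hence here by $2$.

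I would present this as: (1) identify $S^3/\tau \cong S^3$ and describe $\bar K$ as an arc forming an unknotted circle with the rest of the branch set; (2) push $\bar K$ into $D^4$ to bound a disk $\bar D$ and set $X = \Sigma_2(D^4, \bar D)$; (3) observe $K$ lifts to a disk in $X$; (4) run the transfer/Smith-sequence argument to get $H_*(X;\Q)\cong H_*(D^4;\Q)$ and the $2$-torsion bound. The main obstacle is step (1) — rigorously identifying the quotient of $S^3$ by an orientation-reversing involution with $S^0$ fixed set as $S^3$, and understanding the branch data well enough to see the relevant arc is unknotted; this is classical but requires care (one can invoke that such involutions are conjugate to a standard model, e.g.\ via the equivariant sphere theorem or by directly manipulating the symmetric diagram of $K$), and everything downstream is then a routine homological bookkeeping with double branched covers. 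Since the lemma is quoted as a special case of \cite{kawauchi}, I would in practice cite that reference for the construction and only sketch the homology verification adapted to the strongly negative-amphichiral setting.
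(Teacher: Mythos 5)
Your construction starts from the claim that, because $\tau$ is an orientation-reversing involution of $S^3$ with $S^0$ fixed set, the quotient $S^3/\tau$ is again $S^3$. That claim is false, and the error propagates through everything that follows. At each fixed point $p$ of a smooth involution, the involution is locally conjugate to its differential $d\tau_p$; since $p$ is an isolated fixed point, $d\tau_p = -\mathrm{Id}$ on $T_pS^3 \cong \mathbb{R}^3$ (this is also the unique choice consistent with $\tau$ being orientation-reversing). Hence the link of the image of $p$ in $S^3/\tau$ is $S^2/(v\mapsto -v) = \mathbb{R}P^2$, so $S^3/\tau$ is not even a topological manifold near the two cone points, let alone $S^3$. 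This also means there is no ``branch circle'' through the two fixed points -- the branch locus is $S^0$, codimension three -- so the picture of $\bar K$ as half of an unknotted circle in a quotient $S^3$, and the subsequent step of pushing that arc into $D^4$ and taking a double branched cover, does not make sense. (It is also worth noting that, even setting this aside, the double branched cover of $D^4$ along a properly embedded \emph{disk} is again a ball; it is not the object you want.)

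The paper's argument, following Kawauchi, avoids the quotient-of-$S^3$ issue entirely by first passing to the zero-surgery $M_K$: since $\mathrm{Fix}(\tau)$ lies on $K$, the restriction of $\tau$ to the knot exterior is free, and it extends to a \emph{fixed-point free} orientation-reversing involution $\hat\tau$ of $M_K$. Because $\hat\tau$ is free, $M := M_K/\hat\tau$ is a closed $3$-manifold, and one forms $X$ by gluing the surgery trace $W$ of the $0$-surgery to the twisted interval bundle $Z$ over $M$ along their common boundary $M_K$. The slice disk for $K$ is the core of the $2$-handle of $W$, and the homology computation is a short Mayer--Vietoris/handle argument: $Z \simeq M$ has $H_1(Z;\mathbb{Z})/i_*H_1(M_K;\mathbb{Z}) \cong \mathbb{Z}_2$, and the $2$-handle kills $i_*H_1(M_K;\mathbb{Z})$, giving $H_1(X;\mathbb{Z}) \cong \mathbb{Z}_2$. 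To salvage your write-up you would need to replace the quotient of $S^3$ by the quotient of $M_K$, at which point you are reproducing the paper's proof rather than giving an alternative one.
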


\begin{proof}
Let $\tau$ be the orientation-reversing involution on $S^3$ with $\tau(K)=K$ where the fixed point set is two points. Let $M_K$ be the three-manifold obtained by performing 0-surgery on $K$. Then the involution $\tau$ extends from the exterior of $K$ to a fixed-point free orientation-reversing involution $\hat{\tau}$ on $M_K$.

The rational homology ball $X$ of the lemma is now constructed as follows: Consider the trace $W$ of the 0-surgery $M_K$, \ie\ $W$ is the four-manifold we get from $S^3\times [0,1]$ by attaching a 0-framed 2-handle along $K\subset S^3\times \{1\}$. Consider the quotient of $W$ by $\hat{\tau}$ on its boundary component diffeomorphic to $M_K$. The resulting compact four-manifold $X$ has $S^3$ as its boundary, and $K\subset S^3\times \{ 0\}$ is obviously slice in $X$: the slice disk is simply the core of the 2-handle (trivially extended through $S^3\times [0,1]$).

In order to complete the proof of the lemma, it would be enough to show that $H_*(X; \Q )=H_* (D^4; \Q)$ and $H_1(X; \Z ) \cong\mathbb{Z}_2$. For this computation, we consider an alternative description of $X$ as follows. Factoring $M_K$ by the free involution $\hat{\tau}$ we get a three-manifold $M$, together with a principal $\Z _2$-bundle $\pi \colon M_K \to M$ and an associated interval-bundle $Z\to M$. Note that $\partial Z = M_K$ and that $Z$ retracts to $M$. Then $X$ is the union of the surgery trace $W$ with $Z$, glued along $M_K$, \ie\ the four-manifold obtained by attaching 0-framed 2-handle along the meridian of $\partial Z = M_K$. The inclusion map $i$ induces the following exact sequence
$$H_1(\partial Z ;\mathbb{Z}) \xrightarrow{i_*} H_1(Z;\mathbb{Z}) \rightarrow \mathbb{Z}_2 \rightarrow 0.$$ This implies that $H_1(X; \Z ) \cong \mathbb{Z}_2$ since a 2-handle is attached along the generator of $H_1(\partial Z ;\mathbb{Z})$ to obtain $X$.\end{proof}

\begin{figure}[h]
\includegraphics[width=7cm]{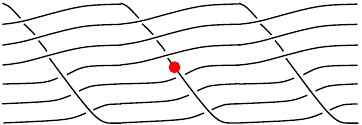}
\caption{Reflection to the red dot provides an involution $\tau \colon S^3\to
S^3$ verifying that the knot is strongly negative-amphichiral.}
\label{fig:sna}
\end{figure}

\begin{proof}[Proof of Proposition \ref{prop:ratslice}]
Figure~\ref{fig:sna} shows that $K_{m,n}$ is strongly negative-amphichiral; indeed, if the red dot of Figure~\ref{fig:sna}
is in the origin, the knot can be isotoped slightly so that the map
$v\mapsto -v$ for $v\in {\mathbb {R}} ^3$ provides the required $\tau$.  Then
Lemma~\ref{lem:ratslice} completes the proof of the
proposition.
\end{proof}

We recall a well known lemma of Casson and Gordon and for completeness
sketch its proof.

\begin{lem}[{\cite[Lemma 4.2]{Casson-Gordon:1978-1}}]\label{lem:cglemma}
Suppose that $q=p^\ell$ is an odd prime power, and $K$ is a knot that is slice in a rational homology ball $X$ with only $2$-torsion in $H_1(X;\mathbb{Z})$. Then $\Sigma_q(K)$ bounds a rational homology ball.\end{lem}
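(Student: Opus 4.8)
The plan is to build the required rational homology ball as the $q$-fold cyclic branched cover $\Sigma_q(X,D)$ of $X$ along the slicing disk $D$ of $K$, and then to check that this four-manifold is rationally acyclic with boundary $\Sigma_q(K)$. First I would set up the cover and record its basic invariants. Write $X_0 = X\setminus N(D)$, where $N(D)\cong D^2\times D^2$. A Mayer--Vietoris argument for $X = X_0\cup N(D)$ (whose overlap deformation retracts to the meridian $\mu$ of $D$), together with $H_*(X;\Q)\cong H_*(D^4;\Q)$ and the fact that $H_1(X;\Z)$ is a $2$-group, shows that $H_1(X_0;\Z)$ has free part $\Z$, torsion a $2$-group, and that $\mu$ maps to $\pm 2^a$ (a power of $2$, hence coprime to $q$) in the free quotient $H_1(X_0;\Z)/\mathrm{tors}\cong\Z$. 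Since $q$ is odd, reducing the projection $H_1(X_0;\Z)\twoheadrightarrow\Z$ modulo $q$ thus produces a homomorphism $\pi_1(X_0)\to\Z_q$ carrying $\mu$ to a generator; this defines $\Sigma_q(X,D)$. Because the meridian of $K$ is isotopic to $\mu$ inside $X_0$, restricting this homomorphism to the exterior of $K$ in $\partial X = S^3$ and filling in the branch locus identifies $\partial\Sigma_q(X,D)$ with $\Sigma_q(K)$, which is a rational homology sphere since $q$ is a prime power. A direct count gives $\chi(\Sigma_q(X,D)) = \chi(D) + q\cdot\chi(X_0) = 1 + q\cdot 0 = 1$ (here $\chi(X_0)=0$ because $\chi(X)=1$). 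The same statements hold verbatim for each intermediate cover $\Sigma_{p^j}(X,D)$.

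Next I would reduce to the case $q=p$ prime. Since $q = p^\ell$, the cover $\Sigma_q(X,D)$ is obtained from $X$ by an $\ell$-fold tower of $p$-fold cyclic branched covers, and the branch locus is a disk at every stage (a $p$-fold branched cover of $D^2$ along its center is again $D^2$). So it suffices to prove, by induction on $j$, that $\Sigma_{p^j}(X,D)$ is $\F_p$-acyclic. The base case $j=0$ is $X$ itself: every reduced integral homology group of $X$ is a finite $2$-group --- for $H_1$ by hypothesis, and for $H_2$ (and for the vanishing of $H_3$, $H_4$) by Poincar\'e--Lefschetz duality and $\partial X = S^3$ --- and hence vanishes with $\F_p$-coefficients because $p$ is odd.

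The inductive step is the technical heart of the argument, and the part I expect to be the main obstacle. Suppose $Y$ is an $\F_p$-acyclic four-manifold and $D'\subset Y$ a properly embedded disk along which the $p$-fold cyclic branched cover $\widetilde Y := \Sigma_p(Y,D')$ is defined; I claim $\widetilde Y$ is $\F_p$-acyclic. The deck group $\Z_p$ acts on $\widetilde Y$ semifreely, with fixed-point set the lifted disk $\widetilde{D'}$ and orbit space $Y$; since $Y$ is connected and $Y$ and $\widetilde{D'}$ are both $\F_p$-acyclic, $D'\hookrightarrow Y$ induces an isomorphism on $\F_p$-homology, so $H_*(Y,D';\F_p)=0$. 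A standard Smith-theory (transfer) argument for semifree $\Z_p$-actions then yields $H_*(\widetilde Y;\F_p)\cong H_*(\widetilde{D'};\F_p)\cong H_*(\mathrm{pt};\F_p)$ --- this is the one step where primality of $p$ (hence of $q$ as a prime power) is genuinely used, and it is precisely the mechanism behind \cite[Lemma~4.2]{Casson-Gordon:1978-1}, whose proof I would reproduce here. Applying this with $Y = \Sigma_{p^{j-1}}(X,D)$ and $D'$ the branch disk completes the induction, so $\Sigma_q(X,D)$ is $\F_p$-acyclic.

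Finally I would upgrade $\F_p$-acyclicity to rational acyclicity. Since $H_1(\Sigma_q(X,D);\F_p)=0$ and $H_0$ is torsion-free, $H_1(\Sigma_q(X,D);\Z)\otimes\F_p = 0$, so $H_1(\Sigma_q(X,D);\Z)$ is finite and $b_1=0$. By Lefschetz duality and the long exact sequence of the pair $(\Sigma_q(X,D),\partial)$, using that $\partial\Sigma_q(X,D)=\Sigma_q(K)$ is a connected rational homology sphere, one gets $b_3 = b_1 = 0$; and $b_4=0$ because the boundary is nonempty. Then $1 = \chi(\Sigma_q(X,D)) = 1 - b_1 + b_2 - b_3 + b_4 = 1 + b_2$ forces $b_2=0$. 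Hence $H_*(\Sigma_q(X,D);\Q)\cong H_*(D^4;\Q)$, so $\Sigma_q(X,D)$ is a rational homology ball with $\partial\Sigma_q(X,D)=\Sigma_q(K)$, as desired.
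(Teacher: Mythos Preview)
Your proof is correct. Both you and the paper construct the $q$-fold branched cover $\Sigma_q(X,D)$ and show it is $\F_p$-acyclic, hence a rational homology ball. The paper's argument is considerably more compressed: rather than inducting on the exponent $\ell$ and invoking Smith theory at each $p$-fold step, it passes directly to the infinite cyclic cover $\widetilde X$ of $X\setminus D$ and uses the Milnor exact sequence
\[
\cdots \to \widetilde H_i(\widetilde X;\F_p) \xrightarrow{\,t_*^{q}-\id\,} \widetilde H_i(\widetilde X;\F_p) \to \widetilde H_i(\Sigma_q(D);\F_p) \to \cdots,
\]
observing that $t_*-\id$ is an isomorphism (since $X$ is $\F_p$-acyclic) and hence so is $t_*^q-\id=(t_*-\id)^q$ over $\F_p$. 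This dispatches all $\ell$ at once and is really the same mechanism as your Smith-theory step, just packaged without the induction. Your route is valid---the Smith sequences associated to the filtration of $C_*(\widetilde Y,\widetilde{D'};\F_p)$ by powers of $t-1$ give exactly the isomorphisms you need once $H_*(Y,D';\F_p)=0$---but be aware that this direction (deducing $\F_p$-acyclicity of the total space from that of the orbit space and fixed set) is the less commonly quoted one, so ``standard Smith-theory (transfer) argument'' may warrant a line or two more. On the other hand, your explicit verification that the cover exists (the $\mu\mapsto \pm 2^a$ computation) and your Euler-characteristic upgrade from $\F_p$-acyclicity to rational acyclicity are both more careful than the paper, which simply asserts these points.
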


\begin{proof}
Let $D$ be the disk that $K$ bounds in $X$ and $\Sigma_{q}(D)$ be the $q$-fold cyclic branched cover of $X$ branched along $D$. Consider the infinite cyclic cover, denoted by $\widetilde{X}$, of $X \smallsetminus D$ and the following long exact sequence \cite{Milnor:1968-1} 
$$\dots \rightarrow \widetilde{H}_i(\widetilde{X};\mathbb{Z}_p)
\xrightarrow{t_*^{q}-\id} \widetilde{H}_i(\widetilde{X};\mathbb{Z}_p)
\rightarrow \widetilde{H}_i(\Sigma_{q}(D);\mathbb{Z}_p) \rightarrow
\widetilde{H}_{i-1}(\widetilde{X};\mathbb{Z}_p) \rightarrow \cdots$$
Here $t_*$ is the automorphism induced by the canonical covering
translation. Since $X$ is a rational homology ball with
only 2-torsion in the first homology, $t_*-\id$ is an
isomorphism. Moreover, with $\mathbb{Z}_p$ coefficients we have
$t_*^q-\id = (t_*-\id)^q $. Hence the result follows.
\end{proof}

\begin{proof}[Proof of Theorem~\ref{thm:main1}]
If  $q$ is an odd prime power, then  Proposition~\ref{prop:ratslice} and Lemma~\ref{lem:cglemma} together immediately imply that $\Sigma _q(K_{m,n})$ bounds a rational homology ball. 

Suppose now that $q=2^\ell$. By Proposition~\ref{prop:switch}, we have that  $\Sigma _q(K_{m,n})$ is diffeomorphic to $\Sigma _n (K_{m,q})$. Moreover $n$ was chosen to be an odd prime power,
while $q=2^\ell$ is relatively prime to $2m+1$. Hence the statement follows from 
the first case of this proof.
\end{proof}
%

%

\section{Sliceness obstructions from twisted Alexander polynomials}
\label{sec:twisted}
The goal of this section is to prove Theorem~\ref{thm:main2}. We first prove the following theorem, recalling that $K_n:=K_{1,n}$. 
\begin{thm}\label{thm:non-slice}
The knots $K_{11}, K_{17}$, and $K_{23}$ are not slice; hence are of
order two in ${\mathcal {C}}$.\end{thm}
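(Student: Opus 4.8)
The overall strategy is the classical Casson–Gordon/twisted-Alexander obstruction, organized so that the linking form does most of the work and only two twisted polynomials need to be computed. First I would recall the setup: if $K_n$ were slice, then $\Sigma_3(K_n)$ would bound a rational homology ball $W$, and $H_1(\Sigma_3(K_n);\Z)$ would contain a \emph{metabolizer} $M$ for its linking form $\lambda$ (a subgroup with $M=M^\perp$, so $|M|^2=|H_1(\Sigma_3(K_n);\Z)|$), namely $M=\ker\big(H_1(\Sigma_3(K_n);\Z)\to H_1(W;\Z)\big)$. For each such $M$ and each prime-power-order character $\chi\colon H_1(\Sigma_3(K_n);\Z)\to\Z_d$ vanishing on $M$, the associated twisted Alexander polynomial $\Delta_{K_n,\chi}$ must factor as a norm $f(t)\overline{f(t)}$ (up to units and the usual correction factor) in $\Q(\xi_d)[t^{\pm1}]$. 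To prove $K_n$ is not slice it therefore suffices to exhibit, for \emph{every} metabolizer $M$, some character $\chi$ vanishing on $M$ whose twisted polynomial provably fails to be such a norm.

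The key reduction — and the point where the paper's "careful consideration of the linking form" enters — is to cut down the number of metabolizers one must treat. Here I would use the order-$n$ symmetry of $K_n$ (it is the closure of $(\sigma_1\sigma_2^{-1})^n$, so the deck-transformation-type $\Z_n$ action on the braid induces an action on $\Sigma_3(K_n)$ preserving $\lambda$) together with the structure of $H_1(\Sigma_3(K_n);\Z)$ as a module. Concretely: compute $H_1(\Sigma_3(K_n);\Z)$ from a genus-one Seifert surface (Goeritz/Seifert matrix of the three-braid closure), identify its order and group structure, enumerate the metabolizers of $\lambda$, and observe that the induced $\Z_n$-action permutes them with few orbits. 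Since the twisted Alexander polynomial condition is invariant under this symmetry, it is enough to find a good character for one metabolizer in each orbit; for $n=11,17,23$ this collapses the problem to checking just two twisted polynomials. I would then choose explicit characters $\chi\colon H_1(\Sigma_3(K_n);\Z)\to\Z_d$ vanishing on the relevant representative metabolizers.

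The final, most computational step is to actually compute those two twisted Alexander polynomials and show they are not norms. Following Appendix~A, I would convert the Seifert-surface description of $\chi$ into a representation $\phi_\chi\colon\pi_1(S^3\ssm K_n)\to GL(3,\Q(\xi_d)[t^{\pm1}])$ on Wirtinger generators, apply Fox calculus to the three-braid presentation, and extract $\Delta_{K_n,\chi}$. To certify that a given $\Delta_{K_n,\chi}$ is not of the form (unit)$\cdot f\overline f$ I would use a prime-reduction argument: reduce the polynomial modulo a suitable rational prime of $\Q(\xi_d)$ (or evaluate at special values of $t$) and show the resulting element of a finite field or finite ring is not a norm there — a finite, verifiable check. \textbf{The main obstacle} is the bookkeeping in the middle step: correctly computing the linking form on $H_1(\Sigma_3(K_n);\Z)$, correctly identifying how the $\Z_n$-symmetry permutes its metabolizers, and making sure the chosen characters genuinely vanish on representatives of every orbit — if any metabolizer is missed, the non-sliceness conclusion fails. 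The translation of Appendix~A (matching the Seifert-form picture of homology/characters with the Wirtinger/Fox-calculus picture needed by the algorithm) is the other delicate point. Once Theorem~\ref{thm:non-slice} is established, Theorem~\ref{thm:main2} follows by combining it with Theorem~\ref{thm:main1} (so $K_7,K_{11},K_{17},K_{23}$ each span a $\Z_2$ in $\ker\varphi$) and with the Kirk–Livingston fact that $J=8_{17}\#8_{17}^r$ is order two in $\ker\varphi$; linear independence over $\Z_2$ then reduces to showing no nontrivial sum of these five classes is slice, which is detected by the same branched-cover/twisted-polynomial invariants applied summand-by-summand (the relevant $\Sigma_3$'s have coprime-order homology, so the obstructions do not interfere).
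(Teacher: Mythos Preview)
Your overall plan matches the paper's proof almost exactly: apply the Kirk--Livingston obstruction at $q=3$, reduce the metabolizer check to two orbits via the order-$n$ symmetry, compute the two relevant twisted Alexander polynomials via Fox calculus and the translation of Appendix~A, and certify that each is not a norm by reducing to $\Z_s[t^{\pm1}]$ and inspecting the irreducible factorization there.

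Two concrete points need correction before this goes through. First, there is no genus-one Seifert surface for $K_n$: the natural surface (the one the paper uses, coming from the three-braid closure) has genus $n-1$, and this is what yields $H_1(\Sigma_3(K_n);\Z)\cong(\Z_n)^4$ together with an explicit linking-form matrix. Second, and more substantively, you omit that the metabolizer arising from a slice disk is \emph{covering-transformation invariant}, i.e.\ a $\Z[t^{\pm1}]$-submodule rather than merely a subgroup. This is what makes the enumeration tractable: as a group, $(\Z_n)^4$ has far more than $2n$ self-annihilating subgroups of order $n^2$, and the $\Z_n$-symmetry would not collapse them to two orbits. The paper's count---at most $n^2+1$ order-$n^2$ submodules, of which at most $2n$ are isotropic for $\lambda$, falling into exactly two $r_*$-orbits (one a fixed point $P_+$, the other of size $n$ containing $P_-$)---only works because one restricts to $t$-invariant metabolizers from the outset, as licensed by Theorem~\ref{thm:twistedalexsliceobstruction}. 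You should make this restriction explicit when you ``enumerate the metabolizers of $\lambda$.''
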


The sliceness obstruction we intend to use in the proof of
Theorem~\ref{thm:non-slice} rests on a result of Kirk and Livingston
\cite{Kirk-Livingston:1999-1} involving twisted Alexander
polynomials. Throughout the rest of the section, $e^{2 \pi i/d}$ is
denoted by $\xi_d$, and the three-manifold obtained by performing
0-surgery on $K$ is denoted by $M_K$. We generally follow the
exposition of~\cite{HeraldKirkLivingston10}, and refer the reader to
that work for more details.

\begin{defn}\label{def:twstmod}
Given a representation $\alpha\colon \pi_1(M_K)\to
GL(q,\Q[\xi_d][t^{\pm 1}])$, the {\emph{twisted Alexander module}} $\mathcal{A}^\alpha(K)$ is the $\Q[\xi_d][t^{\pm 1}]$-module
$H_1(M_K;\Q[\xi_d][t^{\pm 1}]^q)$.\end{defn}

\begin{defn}\label{def:twstpoly}
The {\emph{twisted Alexander polynomial}} $\widetilde\Delta_K^\alpha(t)$ is the generator of the order ideal of $\mathcal{A}^\alpha(K)$; this polynomial is well-defined up to multiplication by units in $\Q[\xi_d][t^{\pm 1}]$.
\end{defn}
Twisted Alexander polynomials generalize the classical Alexander polynomial.  If we fix the representation $\alpha_0 \colon \pi_1(M_K) \to GL(1,\Q[t^{\pm 1}])$ (\ie $q=d=1$), then $\mathcal{A}^{\alpha_0}(K)$ is the classical (rational)
Alexander module $\mathcal{A}(K)$ of $K$ and
$\Delta_K(t):=\widetilde\Delta_K^{\alpha_0}(t)$ is the classical Alexander polynomial.

We will restrict to a special class of representations as
follows. First, choose $q \in \mathbb{N}$ and a character $\chi\colon
H_1(\Sigma_q(K);\Z)\to\Z_d$.  Note that $H_1(\Sigma_q(K);\Z)\cong
\mathcal{A}(K)/\langle t^q-1 \rangle$ and that a choice of a meridian
for $K$ determines a map from $\pi_1(M_K)$ to $\Z \ltimes
\mathcal{A}(K)/\langle t^q-1 \rangle$, as discussed in more detail in
Appendix~\ref{section:polycomp}. The character $\chi$ therefore
induces $\alpha_\chi\colon \pi_1(M_K)\to GL(q,\Q[\xi_d][t^{\pm 1}])$,
and we write
$\widetilde\Delta_K^\chi(t):=\widetilde\Delta_K^{\alpha_\chi}(t)$.
This is a very quick explanation of twisted Alexander
polynomials, and Friedl and Vidussi \cite{friedlvidussi} have a survey of
twisted Alexander polynomials which we recommend for more detailed
exposition.

The obstruction we will use in the proof of
Theorem~\ref{thm:non-slice} is a generalization of the Fox-Milnor
condition~\cite{FoxMilnor}, which states that the Alexander polynomial
of a slice knot factors as $f(t)f(t^{-1})$ for some $f(t) \in
\Z[t^{\pm1}]$. First, recall the following definition.

\begin{defn}\label{def:norm} We call a Laurent polynomial $d(t)\in \Q(\xi_d)[t^{\pm1}]$ a \emph{norm} 
if there exist $c \in \Q(\xi_d)$, $k \in \Z$, and $f(t)
\in \Q(\xi_d)[t^{\pm1}]$ such that
$$d(t)= c t^k f(t) \overline{f(t)},$$ where $\overline{\,\cdot\,}$ is
induced by the $\Q$-linear map on $\Q(\xi_d)[t^{\pm 1}]$ sending $t^i$
to $t^{-i}$ and $\xi_d$ to $\xi_d^{-1}$.
\end{defn}

\begin{thm}[\cite{Kirk-Livingston:1999-1}]\label{thm:twistedalexsliceobstruction}
Suppose that $K\subset S^3$ is a slice knot and $q$ is a prime power. Then there exists a covering transformation invariant metabolizer $P \leq
H_1(\Sigma_q(K); \Z )$ such that if $$\chi \colon H_1(\Sigma_q(K); \Z )
\to \Z_d$$ is a character of odd prime power order such that
$\chi|_P=0$, then $\widetilde{\Delta}_K^{\chi}(t)\in \Q(\xi_d)[t^{\pm1}]$ is a norm.  \qed\end{thm}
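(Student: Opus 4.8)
The plan is to prove Theorem~\ref{thm:twistedalexsliceobstruction} by combining the standard algebraic topology of the branched cover of a slice disk complement with Kirk--Livingston's duality argument for twisted homology. First I would set up the ambient picture: assume $K$ bounds a smooth slice disk $D\subset D^4$, let $W_q=\Sigma_q(D^4,D)$ be the $q$-fold cyclic branched cover, and let $Y_q=\Sigma_q(K)=\partial W_q$. A Mayer--Vietoris / half-lives-half-dies computation shows that $W_q$ is a rational homology ball when $q$ is a prime power (this is essentially the argument behind Lemma~\ref{lem:cglemma}), so the kernel $P:=\ker\big(H_1(Y_q;\Z)\to H_1(W_q;\Z)\big)$ is a metabolizer for the linking form on $H_1(Y_q;\Z)$. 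Since the covering transformation of $Y_q$ extends over $W_q$ (it is the deck transformation of the branched cover of the pair), $P$ is invariant under the induced action on $H_1(Y_q;\Z)$. This produces the covering-transformation-invariant metabolizer promised in the statement.

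Next I would bring in the character. Given $\chi\colon H_1(Y_q;\Z)\to\Z_d$ of odd prime power order with $\chi|_P=0$, the vanishing on $P$ means $\chi$ extends to $\bar\chi\colon H_1(W_q;\Z)\to\Z_d$; indeed, since $P$ is a metabolizer and $d$ is an odd prime power, one checks (using that $H_1(W_q;\Z)/\text{torsion-prime-to-}d$ maps onto $H_1(Y_q;\Z)/P$ up to index prime to $d$) that $\chi$ factors through $H_1(W_q;\Z)$. The representation $\alpha_\chi$ on $\pi_1(M_K)$ is built, as recalled in the text and Appendix~\ref{section:polycomp}, from the metabelian quotient $\Z\ltimes\big(\mathcal{A}(K)/\langle t^q-1\rangle\big)$ together with $\chi$; the point is that this representation extends over $\pi_1$ of the (unbranched) slice disk exterior $X:=D^4\smallsetminus\nu(D)$, precisely because $H_1(\Sigma_q(K))=\mathcal{A}(K)/\langle t^q-1\rangle$ surjects appropriately onto the corresponding quotient of $H_1$ of the $q$-fold cover of $X$ and $\chi$ kills $P$. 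Thus we get $\alpha_\chi\colon\pi_1(X)\to GL(q,\Q(\xi_d)[t^{\pm1}])$ restricting on $\partial X=M_K$ to the given representation.

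The core of the argument is then a Poincar\'e--Lefschetz duality computation for the twisted homology of $(X,\partial X)$ with $\Q(\xi_d)[t^{\pm1}]$ coefficients. Over the field of fractions $\Q(\xi_d)(t)$, the half-lives-half-dies principle forces $H_1(M_K;\Q(\xi_d)(t)^q)$ to be a $\Q(\xi_d)(t)$-vector space of even dimension, with the image of $H_1(M_K)\to H_1(X)$ a Lagrangian with respect to the nonsingular $(-1)$-Hermitian intersection/linking pairing. Translating this Lagrangian condition into the language of the order of the torsion module $\mathcal{A}^{\alpha_\chi}(K)=H_1(M_K;\Q(\xi_d)[t^{\pm1}]^q)$, one gets that $\widetilde\Delta_K^\chi(t)$ equals (up to a unit) $f(t)\overline{f(t)}$ where $f(t)$ is the order of the relevant half, i.e.\ $\widetilde\Delta_K^\chi(t)$ is a norm in the sense of Definition~\ref{def:norm}. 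I would carry this out by citing the precise duality statement in \cite{Kirk-Livingston:1999-1} (their Theorem establishing the norm factorization for slice knots) rather than redoing the spectral sequence bookkeeping; the role of $q$ being a prime power is to guarantee $W_q$ has the right rational homology, and the role of $d$ being an odd prime power is to control torsion when extending $\chi$ over $W_q$ and over the cover of $X$.

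The main obstacle I anticipate is the extension step: verifying that $\chi$, vanishing on the metabolizer $P$, genuinely induces a representation $\alpha_\chi$ of $\pi_1(X)$ that restricts to the prescribed $\alpha_\chi$ on $\pi_1(M_K)$, and not merely a representation up to some ambiguity coming from the choice of meridian or from torsion in $H_1(W_q;\Z)$. This requires being careful about how the semidirect-product structure $\Z\ltimes(\mathcal{A}(K)/\langle t^q-1\rangle)$ behaves under the inclusion $M_K\hookrightarrow X$ and about the fact that $\pi_1(X)$ abelianizes to $\Z$ while its metabelian quotient is controlled by the Alexander module of the slice disk exterior; here the hypothesis that $d$ is an odd prime power is exactly what lets one ignore the $2$-torsion and prime-to-$d$ torsion obstructions. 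Once the representation extends, the duality computation is essentially formal, so the extension is the step deserving the most care.
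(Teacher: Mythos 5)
The paper does not give its own proof of this theorem: the statement is attributed to~\cite{Kirk-Livingston:1999-1}, and the \verb|\qed| immediately following it is a citation marker rather than the end of a proof. There is therefore no internal argument to compare against; the relevant comparison is with the proof in Kirk--Livingston's paper (and with the Casson--Gordon template it follows).

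With that caveat, your outline correctly reconstructs that standard argument: form the $q$-fold branched cover $W_q$ of $(D^4,D)$, observe that it is a rational homology ball when $q$ is a prime power, take $P=\ker\bigl(H_1(\Sigma_q(K);\Z)\to H_1(W_q;\Z)\bigr)$ as the invariant metabolizer, extend the metabelian representation over the slice disk exterior when $\chi$ vanishes on $P$, and invoke Poincar\'e--Lefschetz duality for twisted coefficients to obtain the norm factorization. The one place where your sketch is a bit more confident than the details warrant is the parenthetical claim that $\chi$ automatically factors through $H_1(W_q;\Z)$ with values still in $\Z_d$: since $\Z_d$ is not an injective $\Z$-module, the extension in general lands in $\Z_{d^a}$ for some higher power of the same prime, and one then has to deduce the $\Q(\xi_d)$ norm statement from the $\Q(\xi_{d^a})$ one (or, equivalently, work with $\Q/\Z$-valued characters throughout and specialize at the end). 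You correctly identify the extension as the step deserving the most care, and that is indeed where the prime-power hypotheses on both $q$ and $d$ are used; Kirk and Livingston handle exactly this bookkeeping, and a careful write-up should either reproduce it or cite the specific lemma rather than asserting the extension outright.
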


Let $K \in \{K_{11}, K_{17}, K_{23} \}$. We first determine the metabolizers of $H_1(\Sigma_3(K);\Z)$ and construct prime order characters vanishing on
each metabolizer in Subsection~\ref{section:homologyofcover}.  We then
show that the corresponding twisted Alexander polynomials of $K$ do
not factor as a norm in Section~\ref{sec:proof}.

\subsection{The metabolizers of $H_1(\Sigma_3(K_n);\Z)$}
\label{section:homologyofcover}


We assume that
$n$ is odd and not divisible by 3, so in particular $K_n$ is a knot. Our understanding of $H_1(\Sigma_3(K_n);\Z)$ and its metabolizers will
come from a computation of the Alexander module and the Blanchfield
pairing of $K_n$.  Throughout this section, we also keep track of the
order $n$ symmetry of $K_n$, which will be useful later on to reduce
the number of twisted Alexander polynomials we must compute.

Observe that $K:=K_n$ has a genus $n-1$ Seifert surface $F$, illustrated in Figure~\ref{fig:seif} for $n=7$,  which is
invariant under the periodic order $n$ symmetry $r \colon S^3 \to S^3$ given diagrammatically by rotating counterclockwise by $2\pi/n$.
\begin{figure}[h]
\includegraphics[height=5cm]{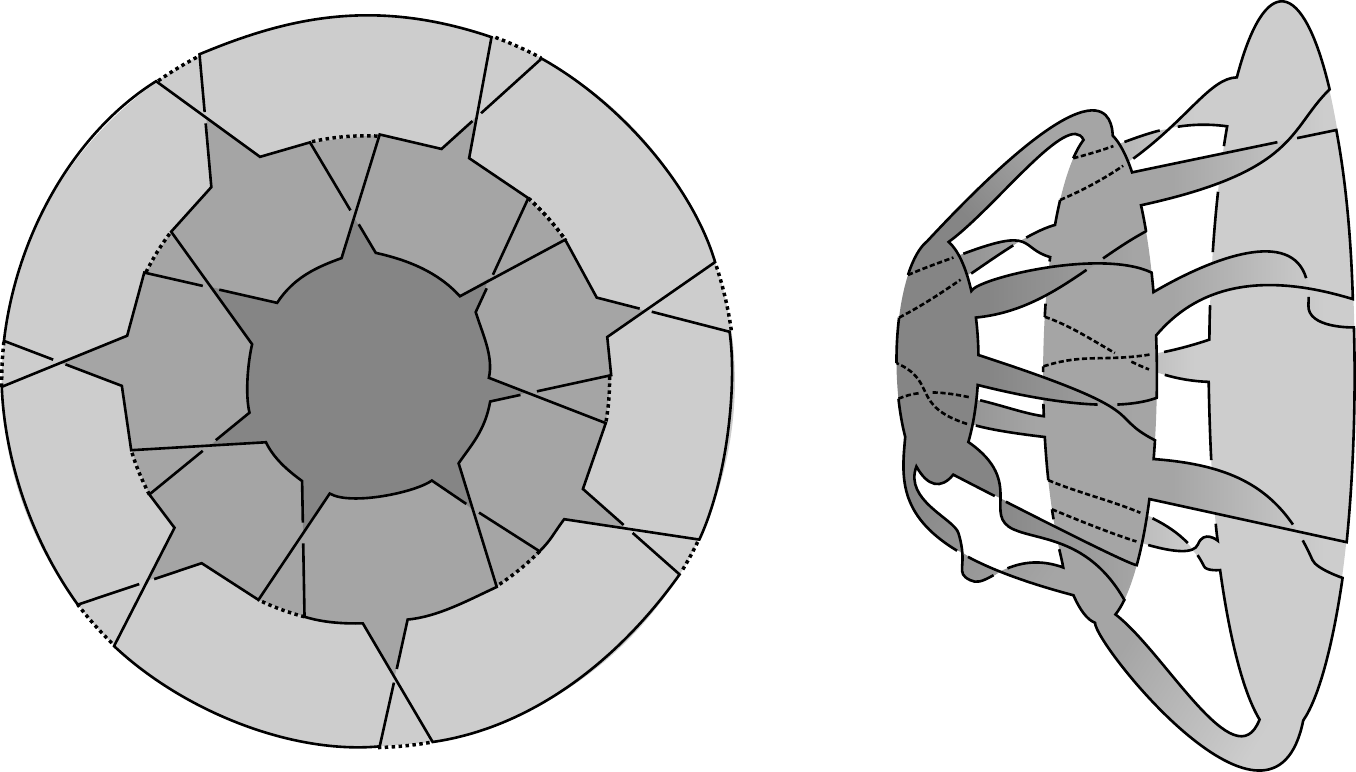}
\caption{A Seifert surface $F$ for $K$ from two different perspectives.} \label{fig:seif}
\end{figure}
We pick a collection of simple closed curves $\al_1, \dots, \al_{n-1}, \be_1, \dots, \be_{n-1}$ on $F$ that form a basis for $H_1(F; \Z)$ as illustrated in Figure~\ref{fig:basis}.
\begin{figure}[h]
\includegraphics[height=4cm]{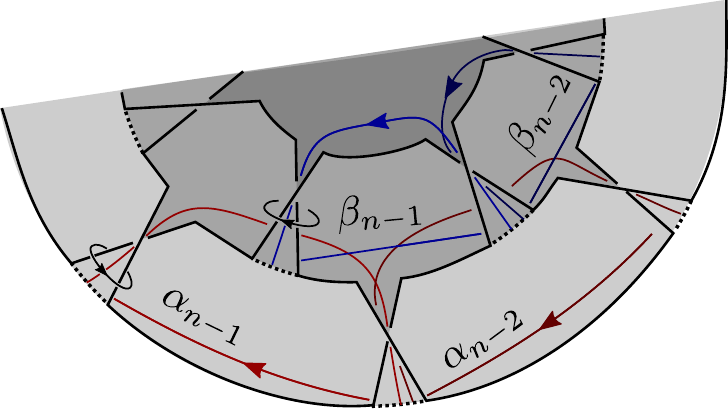}
\caption{A basis of curves for $H_1(F; \Z)$.} \label{fig:basis}
\end{figure}
 Note that $r(\al_i)=\al_{i-1}$ and $r(\be_{i})= \be_{i-1}$ for $i>1$, while the induced action of $r$ on $[\al_1], [\be_1] \in H_1(F; \Z)$ is given by 
$$r_*([\al_1])=\sum_{i=1}^{n-1}
-[\al_{i}] \text{ and } r_*([\be_1])=\sum_{i=1}^{n-1} -[\be_{i}].$$
%

It is straightforward to compute the Seifert matrix $A$ for the Seifert pairing on $F$ with respect to our fixed basis, and we obtain
$A= \left[ \begin{array}{cc} -B^T& 0 \\ B & B \end{array}\right],$
where $B$ is the $(n-1) \times (n-1)$ matrix with entries given by
$
B_{i,j}= \begin{cases} 1 & i=j \\ -1 & i=j-1 \\ 0 & \text{else} \end{cases}
$.  Recall that Blanchfield \cite{Blanchfield:1957-1} showed that the Alexander module $\A(K)$ supports a non-singular pairing $$\Bl\colon \A(K) \times\A(K) \to \Q(t)/ \Z[t^{\pm1}]$$ called the \emph{Blanchfield pairing}. The pairing can be computed using a Seifert matrix of $K$ as follows, for more details see \cite{Friedl-Powell:2017-1, Kearton:1975-1, Levine:1977-1}.

\begin{thm}[{\cite[Theorem 1.3 and 1.4]{Friedl-Powell:2017-1}}]\label{thm:FriedlPowell}
Let $F$ be a Seifert surface for a knot $K$ with a collection of simple closed curves $\delta_1, \dots, \delta_{2g}$ on $F$ that form a basis for $H_1(F; \Z)$ and corresponding Seifert matrix $A$.
Let $\widehat{\delta}_1, \dots,\widehat{\delta}_{2g}$ be a collection
of simple closed curves in $S^3 \smallsetminus \nu(F)$ representing a
basis for $H_1(S^3 \smallsetminus \nu( F); \Z)$ satisfying 
$\lk(\delta_i, \widehat{\delta_j})= \delta_{i,j}$ $($\ie the Alexander
dual basis$)$, where $\nu(F)$ denotes an open tubular neighborhood $F\times I$. Consider the standard decomposition of the infinite
cyclic cover of the knot exterior as
\[
X_K^{\infty} = \bigcup_{i=-\infty}^{+\infty} (S^3 \smallsetminus \nu(F))_i,
\]
and let the homology class of the unique lift of $\widehat{\delta}_i$
to $(S^3 \smallsetminus \nu(F))_0$ be denoted by $d_i$. Then the map
\begin{align*}
p \colon \left(\Z[t^{\pm1}]\right)^{2g} & \to \A(K) \\
(x_1, \ldots, x_{2g}) & \mapsto \sum_{i=1}^{2g} x_i d_i.
\end{align*} is surjective and has kernel given by
$(tA-A^T)\Z[t^{\pm{1}}]^{2g}$. Moreover, the Blanchfield pairing is given as follows: for $x,y \in \Z[t^{\pm1}]^{2g}$ we have $$\Bl(p(x),p(y))=(t-1)x^T (A-tA^T)^{-1} \overline{y} \in \Q(t)/ \Z[t^{\pm1}],$$ where $\overline{\,\cdot\,}$ is induced by the $\Z$-linear map on
$\Z[t^{\pm1}]$ sending  $t^i$ to $t^{-i}$.\qed
\end{thm}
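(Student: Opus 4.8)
The statement is the classical computation of a knot's Alexander module and Blanchfield pairing from a Seifert matrix (in the normalization of the cited reference), so the plan is to run the standard infinite-cyclic-cover argument carefully enough to pin down both the presentation matrix $tA-A^T$ and the pairing formula. Write $Y:=S^3\smallsetminus\nu(F)$ for the exterior of the Seifert surface: cutting $S^3$ along $F$ produces $Y$, whose boundary is the closed genus-$2g$ surface obtained by doubling $F$ along $\partial F$. The infinite cyclic cover $X_K^{\infty}$ of the knot exterior is then assembled as $\bigcup_{i\in\Z}Y_i$ with $Y_i\cong Y$ and $Y_i\cap Y_{i+1}$ a copy $F_i$ of $F$; the two inclusions $F\hookrightarrow Y$ are the positive and negative push-offs $\iota_{\pm}$, and the deck transformation $t$ carries $Y_i$ to $Y_{i+1}$.

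First I would compute the homology of the building blocks and the gluing maps. By Alexander duality $H_1(Y;\Z)\cong\Z^{2g}$, and the hypothesis fixes the dual basis $\widehat\delta_1,\dots,\widehat\delta_{2g}$ characterized by $\lk(\delta_i,\widehat\delta_j)=\delta_{ij}$. A linking-number computation then shows that in this basis $(\iota_+)_*$ and $(\iota_-)_*$ are given by $A$ and $A^T$ (or their transposes, depending on the orientation convention built into $A$): the $\widehat\delta_i$-coordinate of the class of $\delta_j^{\pm}$ in $H_1(Y;\Z)$ equals $\lk(\delta_j^{\pm},\delta_i)$, an entry of the Seifert matrix. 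Feeding this into the Mayer--Vietoris sequence of $X_K^{\infty}=\bigcup Y_i$, read as a sequence of $\Z[t^{\pm1}]$-modules (each $\bigoplus_i H_1(F_i)$ and $\bigoplus_i H_1(Y_i)$ being free of rank $2g$ with $t$ acting by the shift), gives
\[
\Z[t^{\pm1}]^{2g}\xrightarrow{\ tA-A^T\ }\Z[t^{\pm1}]^{2g}\longrightarrow H_1(X_K^{\infty})\longrightarrow\bigoplus_i H_0(F_i)\xrightarrow{\ t-1\ }\bigoplus_i H_0(Y_i),
\]
where the middle map sends the $i$th standard basis vector to $d_i$. Since $\det(tA-A^T)$ is a unit multiple of $\Delta_K(t)$, hence nonzero, the first arrow is injective, and since $t-1$ is injective on $\Z[t^{\pm1}]$ the last arrow is injective; hence $H_1(X_K^{\infty})=\A(K)$ is the cokernel of $tA-A^T$ and $p$ is surjective with the stated kernel.

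For the Blanchfield pairing I would use Poincar\'e--Lefschetz duality on $X_K^{\infty}$. Composing the duality isomorphism $H_1(X_K^{\infty};\Z[t^{\pm1}])\cong H^2(X_K^{\infty},\partial X_K^{\infty};\Z[t^{\pm1}])$ with the Bockstein of the coefficient sequence $0\to\Z[t^{\pm1}]\to\Q(t)\to\Q(t)/\Z[t^{\pm1}]\to 0$ and the universal-coefficient identification $H^1(X_K^{\infty};\Q(t)/\Z[t^{\pm1}])\cong\overline{\mathrm{Hom}(\A(K),\Q(t)/\Z[t^{\pm1}])}$ (using that $\A(K)$ is torsion, so $\Q(t)$-coefficients contribute nothing) realizes $\Bl$ as this connecting composite. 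To make it explicit, represent $p(x)$ and $p(y)$ by $1$-cycles in $Y_0$; the vector $(tA-A^T)x$ bounds a $2$-chain $C$ in $X_K^{\infty}$, and $\Bl(p(x),p(y))$ is obtained by intersecting $C$ with the cycle for $p(y)$ and dividing by $\Delta_K(t)$. Carrying out the linear algebra, with the key inputs that the intersection form of $F$ is the unimodular skew-symmetric form $A-A^T$ and that $(tA-A^T)^T=-(A-tA^T)$ while $\overline{tA-A^T}=t^{-1}(A-tA^T)$, produces the closed formula $\Bl(p(x),p(y))=(t-1)\,x^T(A-tA^T)^{-1}\overline{y}$; the factor $(t-1)$ is exactly the map $t-1$ appearing at the $H_0$ level above, and $\overline{\,\cdot\,}$ records that duality pairs a module with its conjugate. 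The same two identities double as consistency checks that guide the bookkeeping: they make the formula $\Z[t^{\pm1}]$-sesquilinear, Hermitian, and well defined modulo $\ker p$ in each variable (e.g.\ $\Bl(p((tA-A^T)z),p(y))=-(t-1)z^T\overline{y}\in\Z[t^{\pm1}]$).

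The main obstacle is precisely this last step: passing from the abstract duality description of $\Bl$ to the stated matrix formula demands careful chain-level work on $X_K^{\infty}$ and, more seriously, vigilance about conventions — which push-off is "positive", the orientations of $F$ and of the cover, whether the deck group acts on the left or the right, and hence whether the presentation matrix emerges as $tA-A^T$ or $A-tA^T$, and correspondingly whether $(A-tA^T)^{-1}$ or its transpose/conjugate appears. A clean way to manage this is to first establish a more symmetric version for a general compact $3$-manifold equipped with a bicollared separating surface and then specialize; alternatively, having guessed the formula, one verifies $\Z[t^{\pm1}]$-sesquilinearity, vanishing on $(tA-A^T)\Z[t^{\pm1}]^{2g}$, and non-degeneracy directly (the first two reduce to $(tA-A^T)^T(A-tA^T)^{-1}=-\id$), and then identifies the resulting nonsingular pairing with the Blanchfield pairing via its characterization through linking numbers of $1$-cycles in $X_K^{\infty}$.
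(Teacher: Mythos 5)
The paper does not prove this theorem: it is a quoted result (note the \qed at the end of the statement), and the body of the paper only uses it as a black box. So there is no in-paper argument to compare your proposal against, and the right standard is whether your sketch would serve as a proof of the cited result.

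Your sketch does follow the standard route, which is essentially what the cited reference carries out: a Mayer--Vietoris computation on the decomposition $X_K^\infty=\bigcup_i(S^3\smallsetminus\nu(F))_i$ to present $\A(K)$ as $\mathrm{coker}(tA-A^T)$, and Poincar\'e--Lefschetz duality composed with the Bockstein of $0\to\Z[t^{\pm1}]\to\Q(t)\to\Q(t)/\Z[t^{\pm1}]\to 0$ for the pairing. Your bookkeeping of the $H_0$-tail of the MV sequence is right (the map is multiplication by $t-1$, hence injective, so $H_1$ is precisely the cokernel), and the algebraic identities you invoke are correct and do exactly what you claim: $(tA-A^T)^T=-(A-tA^T)$ yields vanishing of the proposed pairing on $\ker p = (tA-A^T)\Z[t^{\pm1}]^{2g}$, and $\overline{tA-A^T}=t^{-1}(A-tA^T)$ controls sesquilinearity and Hermitian symmetry. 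The one genuine gap --- which you flag honestly --- is that you do not carry out the chain-level computation that actually \emph{derives} the closed form $(t-1)x^T(A-tA^T)^{-1}\overline{y}$, including the normalization factor $(t-1)$; that step requires choosing explicit $2$-chains bounding $1$-cycles in $X_K^\infty$, pushing off $F$ consistently, and tracking how the deck action interacts with the cup/intersection product, and it is precisely where the sign and convention hazards you enumerate live. Your fallback strategy (verify $\Z[t^{\pm1}]$-sesquilinearity, vanishing on the relations, non-degeneracy, and then identify with Blanchfield via linking numbers of $1$-cycles in the infinite cyclic cover) is a legitimate way to sidestep that, and would close the argument if written out. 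In short: a sound sketch of the standard proof, with one substantial but clearly acknowledged computation left to the reader --- which is also why the paper simply cites Friedl--Powell rather than reproving it.
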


Following the language above, let $\hat{\al}_1, \dots,
\hat{\al}_{n-1}, \hat{\be}_1, \dots, \hat{\be}_{n-1}$ be the Alexander
dual basis of $\al_1, \dots, \al_{n-1}, \be_1, \dots, \be_{n-1}$ and
$a_i, b_i$ be the homology classes of the unique lifts of
$\hat{\al}_i, \hat{\be}_i$, respectively. Note that $\hat{\al}_{n-1}$
and $\hat{\be}_{n-1}$ are illustrated in Figure~\ref{fig:basis} as
small closed curves linking $F$. By inspecting the matrix $tA-A^T$,
illustrated below for $n=7$,
\[
\left[\begin{array}{cccccc|cccccc}
1-t & t & 0 & 0 & 0 & 0 & -1 & \bf{1} & 0 & 0 & 0 & 0 \\ 
-1 & 1-t &t& 0 & 0 & 0 & 0  & -1 &  \bf{1}  & 0 & 0 & 0 \\ 
0 & -1& 1-t & t  & 0 & 0 & 0 & 0 & -1 &  \bf{1}  & 0 & 0 \\ 
0 & 0 & -1 & 1-t &t & 0 & 0 & 0 & 0 & -1 &  \bf{1}  & 0 \\ 
0 & 0 & 0 & -1 & 1-t & t & 0 & 0 & 0 & 0 & -1 &  \bf{1}  \\ 
0 & 0 & 0 & 0 & -1 & 1-t & 0 & 0 & 0 & 0 & 0 & -1 \\ 
\hline
t & 0 & 0 & 0 & 0 & 0 & t-1 & 1 & 0 & 0 & 0 & 0 \\ 
-t & t & 0& 0 & 0 & 0 & -t & t-1 &1 & 0 & 0 & 0 \\ 
0 & -t  & t & 0 & 0 & 0 & 0 &  -t & t-1 & 1& 0 & 0 \\ 
0 & 0 &-t  & t & 0 & 0 & 0 & 0 &  -t & t-1 & 1 & 0 \\ 
0 & 0 & 0 & -t  & t & 0 & 0 & 0 & 0 &  -t & t-1 & 1 \\ 
0 & 0 & 0 & 0 & -t  & t & 0 & 0 & 0 & 0 & -t & t-1
\end{array} 
\right]
\]
 we see that we can use the bolded pivot entries to perform column operations over $\Z[t^{\pm1}]$ to transform $tA-A^T$ to a matrix as below:
\[\left[
\begin{array}{cccccc|cccccc}
 0 & 0 & 0  & 0 & 0 & 0 & 0 & 1 & 0 & 0 & 0 & 0 \\ 
 0 & 0 & 0 & 0 & 0 & 0 & 0  &  0 & 1   & 0 & 0 & 0 \\ 
0 &  0 & 0 & 0   & 0 & 0 & 0 & 0 & 0 & 1   & 0 & 0 \\ 
0 & 0 & 0 & 0 & 0 & 0 & 0 & 0 & 0 &  0 & 1   & 0 \\ 
0 & 0 & 0 &  0 & 0 & 0  & 0 & 0 & 0 & 0 &  0 & 1   \\ 
* & * & * & * &  * & * &* & * &* & * & * & * \\ 
\hline
* & \bf{-t} & 0 & 0 & 0 & 0 & t & 1 & 0 & 0 & 0 & 0 \\ 
* & * & \bf{-t}& 0 & 0 & 0 & 0& t &1 & 0 & 0 & 0 \\ 
*& * & * &\bf{-t}& 0 & 0 & 0 &   0& t &1 & 0 & 0 \\ 
*& * &* & * & \bf{-t}  & 0 & 0 & 0 &   0& t &1  & 0 \\ 
*& * &* & * & * & \bf{-t} & 0 & 0 & 0 &   0& t &1  \\ 
*& * &*& * & * & * &  -1 &  -1&  -1 & -1 & -1 & t-1
\end{array} 
\right].
\]
We now use the new bolded entries as pivots to perform column operations to obtain a matrix  whose $i^{th}$ row has a single non-zero entry that occurs in column $i+1$, for all  $i=1, \dots, n-2, n, \dots, 2n-3$. This  matrix is of the following form: 
 \[\left[
\begin{array}{cccccc|cccccc}
 0 & 0 & 0 & 0 & 0 & 0 & 0 & 1 & 0 & 0 & 0 & 0 \\ 
0 & 0 &0 & 0 & 0 & 0 & 0 & 0 & 1 & 0 & 0 & 0 \\ 
0 & 0 & 0 & 0 & 0 & 0 & 0 & 0 & 0 & 1 & 0 & 0 \\ 
0 & 0 & 0 & 0 & 0 & 0 & 0 & 0 & 0 & 0 & 1 & 0 \\ 
0 & 0 & 0 & 0 & 0 & 0 & 0 & 0 & 0 & 0 & 0 & 1\\ 
 *_{n-1,1} & *  & *  &  * &   *&   *& *_{n-1,n} & *  &  * &  * &   *&   *\\ 
\hline
 0& -t & 0 & 0 & 0 & 0 & 0 & 0 & 0 & 0 & 0 & 0 \\ 
0 & 0 & -t & 0 & 0 & 0 & 0 & 0 & 0 & 0 & 0 & 0 \\ 
0 & 0 & 0 & -t  & 0 & 0 & 0 & 0 & 0 & 0 & 0 & 0 \\ 
0 & 0 & 0 & 0 & -t  & 0 & 0 & 0 & 0 & 0 & 0 & 0 \\ 
0 & 0 & 0 & 0 & 0 & -t & 0 & 0 & 0 & 0 & 0 & 0 \\ 
 *_{2n-2, 1} & *  & *  & *  & *  & *  & *_{2n-2, n}&*   &*   &  * &  * &  *
\end{array} 
\right].
\]
Notice that only the $*$-entries with indices have an impact on
$\A(K)$.  In particular, $\A(K)$ is generated by $a_{n-1}$ and
$b_{n-1}$, in the language of the notation introduced just after Theorem~\ref{thm:FriedlPowell}.

For $n=7,11,17,23$ one continues to perform column moves until the above matrix is
simplified to the following form:
\[\left[
\begin{array}{cccccc|cccccc}
0 & 1 & 0 & 0 & 0 & 0 & 0 & 0 & 0 & 0 & 0 & 0 \\ 
0 & 0 & 1 & 0 & 0 & 0 & 0 & 0 & 0 & 0 & 0 & 0 \\ 
0 & 0 & 0 & 1 & 0 & 0 & 0 & 0 & 0 & 0 & 0 & 0 \\ 
0 & 0 & 0 & 0 & 1 & 0 & 0 & 0 & 0 & 0 & 0 & 0 \\ 
0 & 0 & 0 & 0 & 0 & 1 & 0 & 0 & 0 & 0 & 0 & 0 \\ 
p_n(t) & *  & *  & *  &*   &  * & 0 & *  &  * &  * &  * &  * \\ 
\hline
 0& 0 & 0 & 0 & 0 & 0 & 0 & 1 & 0 & 0 & 0 & 0 \\ 
0 & 0 & 0 & 0 & 0 & 0 & 0 & 0 & 1 & 0 & 0 & 0 \\ 
0 & 0 & 0 & 0 & 0 & 0 & 0 & 0 & 0 & 1 & 0 & 0 \\ 
0 & 0 & 0 & 0 & 0 & 0 & 0 & 0 & 0 & 0 & 1 & 0 \\ 
0 & 0 & 0 & 0 & 0 & 0 & 0 & 0 & 0 & 0 & 0 & 1 \\ 
0 & *  & *  &*   &*   & *  & p_n(t) &*   & *  &   *&  * &  *
\end{array} 
\right],
\]
where $$p_n(t)=
\prod_{k=0}^{(n-1)/2}\left(t^2+(\xi_n^k-1+\xi_n^{-k})t+1\right).$$
This and all further computations in
Section~\ref{section:homologyofcover} were done in a Jupyter notebook
and is available on the third author's website.  In particular, this
implies that $\Delta_{K_n}(t)= p_n(t)^2$, which one can verify for
general $n \in \mathbb{N}$ by using the formula for the Alexander
polynomial of a periodic knot in terms of the multivariable Alexander
polynomial of the quotient link~\cite{murasugiperiodic}.

Using the above matrix, we obtain for our values of interest that 
\[\A(K) \cong \Z[t^{\pm1}]/ \langle p_n(t) \rangle \oplus \Z[t^{\pm1}]/ \langle p_n(t) \rangle,\]
where the two summands are respectively generated by $a:=a_{n-1}$ and $b:=b_{n-1}$.

We can also compute the action induced by the order $n$ symmetry $r$ on $\A(K)$. In particular, we can observe that $r(\widehat{\al}_{n-1})$ is a curve whose only non-trivial linkage is $-1$ with $\al_{n-1}$ and $+1$ with $\al_{n-2}$. Similar observations can be made for $r(\widehat{\be}_{n-1})$, and so it follows that the induced action of $r$ on $[\widehat{\al}_{n-1}], [\widehat{\be}_{n-1}] \in H_1(S^3 \smallsetminus \nu(F); \Z)$ is given by
 $$r_*([\widehat{\al}_{n-1}])=-[\widehat{\al}_{n-1}]+[\widehat{\al}_{n-2}] \text{ and } r_*([\widehat{\be}_{n-1}])=- [\widehat{\be}_{n-1}] +[ \widehat{\be}_{n-2}].$$
 Therefore, the action of $r_*$ on the generators of $\A(K)$ is given by 
 $$r_*(a_{n-1})= -a_{n-1}+ a_{n-2} \text{ and } r_*(b_{n-1})= -b_{n-1}+ b_{n-2}.$$ Moreover, by considering the $(n-1)^{th}$ and $(2n-2)^{th}$ columns of $tA-A^T$, we obtain the relations
$$t a_{n-2} + (1-t) a_{n-1}+ t b_{n-1}=0,$$
$$a_{n-2} -a_{n-1} + b_{n-2} +(t-1) b_{n-1} =0.$$
Simple algebraic manipulations give us that 
\begin{equation}\label{eq:actiona}
r_*(a)=r_*(a_{n-1}) =-a_{n-1}+ a_{n-2}
= -t^{-1}a- b,\end{equation}
\begin{equation}\label{eq:actionb}r_*(b)=r_*(b_{n-1})=-b_{n-1}+b_{n-2}
=t^{-1} a +(1- t)b.\end{equation}
Moreover, we obtain that  if $v=f_1(t)a + g_1(t)b$ and $w= f_2(t)a +g_2(t)b$ then
\begin{align*}
\Bl(v,w)=\left[ \begin{array}{c}
f_1(t)\\ g_1(t)
\end{array}\right] ^T\cdot
 \left[ \begin{array}{cc} 
c_{11}& c_{12} \\
c_{21} & c_{22}
\end{array}\right]
\cdot
\left[
\begin{array}{c}
f_2(t^{-1})\\ g_2(t^{-1})
\end{array}
\right]
\end{align*}
where $c_{ij}=  (t-1) (A-tA^T)^{-1}_{(i(n-1),j(n-1))}$. We remark that the interested reader can use this formula to algebraically verify the
geometrically immediate fact that $\Bl(r_*(v), r_*(w))= \Bl(v,w)$ for all
$v,w \in \A(K)$.

In applying Theorem~\ref{thm:twistedalexsliceobstruction} we will take
$q=3$, that is, we will consider the 3-fold cyclic branched cover $\Sigma
_3(K)$ of $S^3$ branched along $K$, and will derive the sliceness obstruction
from that cover.  We wish to transfer our information about
$(\A(K), \Bl)$ to tell us about $(H_1(\Sigma_3(K); \Z), \lambda)$.  First,
we have that
\begin{align*}
 H_1(\Sigma_3(K); \Z)& \cong \A(K) / \langle t^2+t+1 \rangle \\
 &\cong 
\Z[t^{\pm1}]/ \langle p_n(t), t^2 + t+1 \rangle  \oplus  \Z[t^{\pm1}]/ \langle p_n(t), t^2 + t+1 \rangle   \\
&\cong\Z_n[t^{\pm1}]/ \langle t^2 + t+1 \rangle \oplus \Z_n[t^{\pm1}]/ \langle t^2 + t+1 \rangle,
\end{align*}
where the two summands are generated by the images of $a$ and $b$
(equivalently, lifts of the homology classes of the curves $\widehat{\al}_{n-1}$
and $\widehat{\be}_{n-1}$ to the preferred copy of $S^3 \smallsetminus
\nu(F)$ in $\Sigma_3(K)$). In particular, as a group $H_1(\Sigma_3(K); \Z) \cong (\Z_n)^4$, with natural generators the images of $a, ta, b,$ and $tb$. By a mild abuse of notation, we blur the distinction between the elements of the Alexander module and corresponding elements of $H_1(\Sigma_3(K); \Z)$.

The following result, which is slightly reformulated from~\cite{Friedl:2003-1}, lets us compute the torsion linking form $\lambda$ with respect to our preferred basis. 

\begin{prop}[{\cite[Chapter 2.6]{Friedl:2003-1}}]\label{prop:stefanthesis}
Suppose that $q$ is a prime power and let
$x, y \in H_1(\Sigma_q(K); \Z)$. Choose $\tilde{x}, \tilde{y} \in \A(K)$ which lift $x$ and $y$,
and write $$\Bl(\tilde{y},\tilde{x})= \frac{p(t)}{\Delta_K(t)} \in \Q(t)/
\Z[t^{\pm1}].$$ Since $t^q-1$ and $\Delta_K(t)$ are
relatively prime, one can find $r(t) \in \Z[t^{\pm1}]$ and $c \in
\mathbb{Z}$ such that $\Delta_K(t)r(t) \equiv c \pmod{t^q-1}$. 
Writing $p(t)r(t) \equiv \sum_{i=1}^{q} \alpha_i t^i \pmod{t^q-1}$, 
for $i=0, \dots ,q-1$ we obtain
\[\pushQED{\qed}\lambda_q(x, t^{i} y)= \frac{\alpha_{q-i}}{c}\in \Q/ \Z.
\qedhere\]
\end{prop}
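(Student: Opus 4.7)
The strategy is to express the linking form $\lambda_q$ as a ``descent'' of the Blanchfield pairing $\Bl$ along the natural surjection $\A(K)\twoheadrightarrow \A(K)/(t^q-1)\cong H_1(\Sigma_q(K);\Z)$, the isomorphism being a standard consequence of identifying the $q$-fold branched cover as the quotient of the infinite cyclic cover $\widetilde{X}_K$ by $t^q=1$ followed by filling in the branch locus. Both pairings arise from Poincar\'e duality, $\Bl$ on $\widetilde{X}_K$ and $\lambda_q$ on $\Sigma_q(K)$. The plan is therefore to construct a homomorphism $\Theta$ from the $\Delta_K(t)$-torsion subgroup of $\Q(t)/\Z[t^{\pm 1}]$ to $\Q/\Z$ such that $\lambda_q(x,y)=\Theta(\Bl(\tilde y,\tilde x))$ for any lifts $\tilde x,\tilde y$, and then to make $\Theta$ completely explicit.

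The key algebraic input is that $\gcd(\Delta_K(t),t^q-1)=1$ (a consequence of $\Sigma_q(K)$ being a $\Q$-homology sphere) makes $\Delta_K(t)$ invertible modulo $t^q-1$: the relation $\Delta_K(t)r(t)\equiv c\pmod{t^q-1}$ represents this inverse as $r(t)/c$. Given $\Bl(\tilde y,\tilde x)=p(t)/\Delta_K(t)$, multiplying by $r(t)$ and reducing modulo $(t^q-1)$ produces the same class in $(\Q/\Z)[t^{\pm 1}]/(t^q-1)$ as $p(t)r(t)/c$. Writing $p(t)r(t)\equiv\sum_{j=1}^{q}\alpha_j t^j\pmod{t^q-1}$, one defines $\Theta$ by reading off the constant coefficient: $\Theta(p(t)/\Delta_K(t))=\alpha_q/c$. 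This is well-defined because replacing $p$ by $p+\Delta_K(t)s(t)$ changes $p(t)r(t)$ by $cs(t)$ modulo $(t^q-1)$, so the extracted constant coefficient changes by an integer multiple of $c$, i.e.\ by $0\in\Q/\Z$. For $\lambda_q(x,t^iy)$, the sesquilinearity $\Bl(t^i\tilde y,\tilde x)=t^i\Bl(\tilde y,\tilde x)$ shifts the coefficient to be extracted: the constant term of $t^i\sum_{j=1}^q\alpha_j t^j$ modulo $(t^q-1)$ is $\alpha_{q-i}$, yielding precisely the formula in the proposition.

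The main obstacle is verifying the compatibility $\lambda_q(x,y)=\Theta(\Bl(\tilde y,\tilde x))$ geometrically—that is, checking that the diagram relating $\Bl$, $\lambda_q$, and $\Theta$ commutes under the quotient $\A(K)\to H_1(\Sigma_q(K);\Z)$. The efficient route is to compare Seifert-matrix formulae: Theorem~\ref{thm:FriedlPowell} expresses $\Bl$ as $(t-1)x^T(A-tA^T)^{-1}\bar y$, and $\lambda_q$ admits an analogous presentation in terms of $A$ arising from the standard description of $\Sigma_q(K)$ by gluing $q$ copies of the Seifert surface complement along $q$ lifts of $F$. Matching these formulae, with careful bookkeeping of how multiplication by $r(t)$ effectively inverts $tA-A^T$ modulo $t^q-1$, yields the claim; this compatibility is worked out in detail in Chapter~2.6 of \cite{Friedl:2003-1}, from which the proposition is extracted.
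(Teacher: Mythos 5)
The paper does not actually prove this proposition; it is stated with a terminal \qed and attributed wholesale to Chapter~2.6 of Friedl's thesis, so there is no in-paper argument to match yours against. Your outline is the standard one and your algebraic bookkeeping is correct: $\Delta_K(t)$ is a unit mod $t^q-1$ with inverse $r(t)/c$, the constant-coefficient extraction $\Theta(p/\Delta_K)=\alpha_q/c$ is well defined because replacing $p$ by $p+\Delta_K s$ perturbs $pr$ by $cs$ mod $t^q-1$, and sesquilinearity $\Bl(t^i\tilde y,\tilde x)=t^i\Bl(\tilde y,\tilde x)$ correctly shifts the extracted coefficient to $\alpha_{q-i}$. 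Two small points: you should also note that $\Theta(\Bl(\tilde y,\tilde x))$ is independent of the choice of lifts, since changing $\tilde x$ or $\tilde y$ by an element of $(t^q-1)\A(K)$ changes $\Bl(\tilde y,\tilde x)$ by a multiple of $t^{\pm q}-1$, which dies upon reduction mod $t^q-1$; and the genuinely substantive step --- that $\lambda_q$ on $H_1(\Sigma_q(K);\Z)\cong\A(K)/(t^q-1)$ really is the descent of $\Bl$ under $\Theta$ --- is exactly the content you defer to Friedl's thesis. Since that is the same citation the authors lean on for the entire statement, your write-up is, if anything, more explicit than the paper's; just be aware that the geometric compatibility is the whole theorem, not a routine verification.
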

 
From now on, we take $n$ to be $11$, $17$, or $23$. We expect that the subsequent computations of this section will  hold for general $n\equiv 5\pmod{6}$, but we have not verified these results for $n>23$. When we apply this process to our formula for $\Bl$, we obtain that with respect to the $\Z_n$-basis $\{a, ta, b, tb\}$ our linking form
is given by the matrix
\begin{align*}
L= \frac{1}{n}
 \left[
\begin{array}{cccc}
-1 & -k & -k & k \\
-k & -1 & 0 & -k \\
-k & 0 & 1 & k\\
k & -k & k & 1
\end{array}
\right],
\end{align*}
where $n=2k+1$.

We now wish to show that there are exactly two orbits of the action of
$r$ on the collection of invariant metabolizers of $H_1(\Sigma_3(K_n);\Z)$; this will imply later on that the computation of two twisted Alexander polynomials will suffice to obstruct the sliceness of $K_n$. 
Note that our formulas $(\ref{eq:actiona})$ and $(\ref{eq:actionb})$ hold equally well for the induced action of
$r$ on $H_1(\Sigma_3(K); \Z)$, once we apply the relation $t^3=1$. 
Recalling that $n\in\{11,17,23\}$, we note that since $n \equiv 5 \pmod{6}$ the polynomial $t^2+t+1$ is irreducible in $\Z_n[t^{\pm1}]$. Therefore, since $n$ is also a prime, we see that $\Z_n[t^{\pm1}]/ \langle t^2+t+1 \rangle$ has no non-trivial proper submodules. It follows that there are exactly $n^2+1$ order $n^2$ submodules of $H_1(\Sigma_3(K); \Z)$: first, for any $n_0,n_1 \in \Z_n$ we have \[P_{n_0, n_1}:= \spa_{\Z_n[t^{\pm1}]}\{a+ (n_0+n_1 t)b\}
= \spa_{\Z_n}\{ a+ n_0b + n_1 tb, ta- n_1 b + (n_0-n_1) tb\}
\]
and secondly we have $$P':=\spa_{\Z[t^{\pm1}]}\{ b\}= \spa_{\Z}\{b, tb\}.$$
Using the matrix $L$, we see that  $\lambda(b,b)= \frac{1}{n}  \neq 0\in \Q/\Z,$ and so $P'$ is not a metabolizer. Moreover, observe that the condition $$\lambda(a+ (n_0+n_1 t)b, a+ (n_0+n_1 t)b)=0 \in \Q/\Z$$ gives us a 2-variable ($n_0$ and $n_1$) quadratic polynomial  over $\Z_n$, and hence has at most $2n$ solutions. 

Letting  $\mathcal{P}$ denote the set of all metabolizers, we have shown that $$| \mathcal{P}| \leq 2n .$$ Moreover, note that the map $r_*$ acts on $\mathcal{P}$ and since $n$ is prime and $(r_*)^n=Id$, the orbit of a metabolizer is either of order $n$ or 1. 

A short algebraic argument shows that $r_*(P_{n_0, n_1})= P_{n_0, n_1}$ if and only if $n_0= n_1= 1$. The `if' direction follows immediately from Equation~\eqref{eq:actiona} and~\eqref{eq:actionb}. For the `only if' direction, compute
\[ r(a+ n_0b + n_1 tb)= (1-n_0+n_1)a+(1-n_0)ta+(-1+n_0+n_1) b+ (-n_0+2n_1)tb
\]
and observe that if this element belongs to $P_{n_0, n_1}$ then by looking at the $a$ and $ta$ coefficients we see that  it must equal 
\[(1-n_0+n_1) (a+ n_0b + n_1 tb) + (1-n_0)(ta- n_1 b + (n_0-n_1) tb).\]
Contemplation of the coefficients of $b$ and $tb$ in these two expressions shows that they can only be equal if $n_0=n_1=1$. Moreover, it is not hard to explicitly verify that $P_{-1,-1}$ is also a metabolizer and so there are exactly two orbits. We choose a representative metabolizer for each orbit: 
 \begin{align}\label{eqn:orbitmet}  
 P_+:=P_{1,1} =\spa_{\Z} \{a+ b+tb, ta -  b \} \text{ and  } P_-:=P_{-1,-1}= \spa_{\Z} \{a-b-tb, ta +  b \}.
 \end{align} 
  We note for future reference that it is extremely easy to construct a character $$\chi \colon H_1(\Sigma_3(K); \Z) \to \Z_n$$ vanishing on $P_{\pm}$: choose $\chi(b)$ and $\chi(tb)$ freely and $\chi(a)$ and $\chi(ta)$ are determined.  In fact, we  choose $\chi_\pm$ as follows:
\begin{align}\label{eqn:chidef}
\chi_\pm(a)= \pm1, \chi_\pm(ta)= 0, \chi_\pm(b)=0, \text{ and } \chi _\pm(tb)=-1.
\end{align}
To avoid confusion, we point out here that the `$d$' of Definitions~\ref{def:twstmod} and~ \ref{def:twstpoly} and Theorem~\ref{thm:twistedalexsliceobstruction} happens to be $n$ for us.

\subsection{Proof of the main theorems}\label{sec:proof}

To apply Theorem~\ref{thm:twistedalexsliceobstruction}, we must obstruct the existence of certain factorizations in $\Q(\xi_d)[t^{\pm1}]$. It is easier to obstruct the existence of factorizations in $\Z_p[t^{\pm1}]$, where computer programs are for finiteness reasons capable of proving that no factorization of a given kind exists, and the following propositions allow us to make this transition. 

\begin{prop}[{\cite[Lemma 8.6]{HeraldKirkLivingston10}}]\label{prop:HKLreduction}Let $d, s$ be primes and suppose $s=kd+1$. Choose $\theta \in \mathbb{Z}_s$ so that $\theta \in \mathbb{Z}_s$ is a primitive $d^{th}$ root of unity modulo $s$. The choice of $s$ and $\theta$ defines a map $\pi \colon \Z[\xi_d][t^{\pm1}] \to
\Z_s[t^{\pm1}]$ where $1$ is mapped to $1$ and $\xi_d$ is mapped to $\theta$.

Let $d(t)\in \Z[\xi_d][t^{\pm1}]$ be a polynomial of degree $2N$ such that  $\pi(d(t)) \in \Z_s[t^{\pm1}]$ also has degree $2N$. If $d(t)\in \Q(\xi_d)[t^{\pm1}]$ is a norm then $\pi(d(t)) \in \Z_s[t^{\pm1}]$ factors as the product of two polynomials of degree $N$.\qed\end{prop}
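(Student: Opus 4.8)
The plan is to recognize $\pi$ as reduction modulo a prime of the number ring $\Z[\xi_d]$ lying over $s$, to migrate the problem into a principal ideal domain where Gauss's lemma is available, and then to push the norm factorization of $d(t)$ through $\pi$ while tracking degrees. For a nonzero Laurent polynomial $P$ write $\beta(P)$ for its \emph{breadth}, the top exponent minus the bottom exponent occurring in $P$; over a domain $\beta$ is additive under multiplication, is unchanged by the bar involution or by multiplication by a unit $ct^k$, and satisfies $\beta(\pi(P))\le\beta(P)$ with equality precisely when $\pi$ kills neither the top nor the bottom coefficient of $P$. Here ``degree $2N$'' means $\beta=2N$, and since $d(t)=ct^kf(t)\overline{f(t)}$ with $\beta(d)=2N$ one immediately gets $\beta(f)=N$.

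First I would set up the algebra. The kernel $\mathfrak{p}$ of $\pi$ is a maximal ideal of $\Z[\xi_d]$ with $\mathfrak{p}\cap\Z=(s)$ and $\Z[\xi_d]/\mathfrak{p}\cong\Z_s$, because $\pi$ is a surjective ring map onto the field $\Z_s$ whose restriction to $\Z$ is reduction mod $s$. Let $S:=\Z[\xi_d]_{(s)}$ be the ring obtained from $\Z[\xi_d]$ by inverting every integer coprime to $s$; it is a Dedekind domain with only finitely many maximal ideals (those over $s$), hence a PID, with fraction field $\Q(\xi_d)$. Since the inverted integers become units of $\Z_s$, the map $\pi$ extends to a ring homomorphism $S[t^{\pm1}]\to\Z_s[t^{\pm1}]$; and the bar involution, which fixes $\Z$, restricts to a ring automorphism of $S$.

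The main computation would then take place in $S[t^{\pm1}]$. Working in the PID $S$, I would first replace $(c,f)$ by $(c\gamma\overline\gamma,\gamma^{-1}f)$, where $(\gamma)$ generates the fractional $S$-ideal spanned by the coefficients of $f$; this leaves $d(t)$ unchanged and makes $f$ primitive. As the bar automorphism preserves $S$, $\overline f$ is primitive as well, so by Gauss's lemma $f\overline f$ is primitive; hence the content of $d(t)=ct^kf\overline f$ equals $(c)$, and since $d(t)$ has coefficients in $\Z[\xi_d]\subset S$ this forces $c\in S$. Now the hypothesis $\beta(\pi(d))=2N$ gives $\pi(d)\ne0$, so not every coefficient of $d$ lies in $\mathfrak{p}S$; thus $(c)\not\subseteq\mathfrak{p}S$, i.e.\ $\pi(c)\ne0$, while $\pi(f)\ne0$ and $\pi(\overline f)\ne0$ because $f,\overline f$ are primitive. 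Applying $\pi$ to $d(t)=ct^kf(t)\overline{f(t)}$ then gives
\[
\pi(d(t))=\pi(c)\,t^k\,\pi(f(t))\,\pi(\overline{f(t)}),
\]
a product of nonzero elements of the domain $\Z_s[t^{\pm1}]$. Taking breadths, $\beta(\pi(f))\le N$ and $\beta(\pi(\overline f))\le N$ while their sum equals $\beta(\pi(d))=2N$, so both are exactly $N$. Therefore $\pi(d(t))=\bigl(\pi(c)t^k\pi(f(t))\bigr)\cdot\pi(\overline{f(t)})$ displays $\pi(d(t))$ as a product of two Laurent polynomials of degree $N$ (after multiplying each factor by a suitable power of $t$ they become honest degree-$N$ polynomials, if that is the intended reading).

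The step I expect to be the real obstacle is controlling the scalar $c$ and the denominators of $f$: a priori both are only rational, and a naive ``clear denominators and reduce mod $s$'' can collapse everything to zero modulo $s$. Passing to the PID $S$ is precisely what makes contents principal and multiplicative, so that the normalization pins down $(c)$ exactly and the single hypothesis ``$\pi(d(t))$ still has degree $2N$'' can be converted into the three nonvanishing facts $\pi(c),\pi(f),\pi(\overline f)\ne0$ used at the end. A secondary, easily handled wrinkle is that the bar involution interchanges the various primes of $\Z[\xi_d]$ over $s$ rather than fixing $\mathfrak{p}$; this is exactly why one localizes at $S$ --- retaining all primes over $s$ at once --- instead of at $\mathfrak{p}$ alone, so that $\overline f$ stays in the ring under consideration.
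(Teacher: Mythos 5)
The paper does not include an argument for this proposition at all: it is quoted verbatim from Herald--Kirk--Livingston and closed with a $\qed$, so there is no in-paper proof to compare against. Judged on its own, your argument is correct and complete. The key technical move --- localizing $\Z[\xi_d]$ at the multiplicative set $\Z\smallsetminus s\Z$ to obtain a semi-local Dedekind domain $S$, hence a PID, so that contents become principal and Gauss's lemma applies --- is exactly what is needed to normalize $f$ to be primitive while keeping both $\overline f$ and the scalar $c$ under control, and your observation that localizing at the single prime $\mathfrak p$ would break the bar symmetry is the right reason to use $S$ rather than $\Z[\xi_d]_{\mathfrak p}$. The degree bookkeeping via breadth is also handled correctly. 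One tiny redundancy: once you know $\pi(d)\neq 0$, the identity $\pi(d)=\pi(c)t^k\pi(f)\pi(\overline f)$ in the domain $\Z_s[t^{\pm1}]$ already forces all three factors to be nonzero, so the separate content argument for $\pi(c)\neq 0$ is not strictly needed (though it does no harm). This is a clean fleshing-out of a result the authors cite as known.
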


\begin{prop}\label{prop:reduction}
Given a knot $K$, a preferred meridian $\mu_0$, and a map $\chi \colon H_1(\Sigma_q(K); \Z) \to \Z_d$ where $d$ is a prime, we obtain as above a reduced twisted Alexander polynomial $\tilde{\Delta}_K^{\chi}(t)$. 
By rescaling, assume that $\tilde{\Delta}_K^{\chi}(t)$ is an element of $\Z[\xi_d][t^{\pm1}]$.

Let $s=kd+1$, $\theta \in \mathbb{Z}_s$, and    $\pi \colon \Z[\xi_d][t^{\pm1}] \to
\Z_s[t^{\pm1}]$ be as in Proposition~\ref{prop:HKLreduction}. Suppose that $\pi\left( \widetilde{\Delta}_K^{\chi}(t)\right)$ is a degree $2 \lfloor\frac{c(K)-3}{2}\rfloor$ polynomial which cannot be written
as a product of two degree $\lfloor\frac{c(K)-3}{2}\rfloor$
polynomials in $\Z_s[t^{\pm1}]$. Then $\tilde{\Delta}_K^{\chi}(t) \in \Q(\xi_d)[t^{\pm1}]$ is not a norm.
\end{prop}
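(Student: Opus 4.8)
The plan is to obtain this from Proposition~\ref{prop:HKLreduction} once we have an a priori upper bound for the degree of $\widetilde\Delta_K^\chi(t)$ in terms of $c(K)$, together with the elementary observation that the reduction $\pi$ can never raise the degree of a polynomial. First I would normalize $\widetilde\Delta_K^\chi(t)$ so that it is a genuine polynomial in $\Z[\xi_d][t]$ with nonzero constant term; by Definition~\ref{def:twstpoly} this is harmless, since the twisted Alexander polynomial is defined only up to multiplication by a unit $ct^{k}$, and the hypotheses and conclusion of the proposition (being a norm, the image under $\pi$ having a prescribed degree, and that image not splitting as a product of two polynomials of half that degree) are unaffected by such a rescaling once a nonzero constant term is insisted upon. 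The key claim is then
\[
\deg_t \widetilde\Delta_K^\chi(t) \;\le\; 2\left\lfloor \tfrac{c(K)-3}{2}\right\rfloor .
\]
I would prove this by a degree count in the twisted Fox calculus: computing $\widetilde\Delta_K^\chi(t)$ from a Wirtinger presentation coming from a $c(K)$-crossing diagram of $K$, as reviewed in Appendix~\ref{section:polycomp}, one realizes it as the determinant of the Fox Jacobian with one column (the one indexed by the preferred meridian $\mu_0$) removed, divided by $\det(\alpha_\chi(\mu_0)-I)$. Since every Wirtinger generator is a meridian, $\alpha_\chi$ sends it to $t$ times a monomial matrix over $\Z[\xi_d]$, so the entries of this square array of $q\times q$ blocks have tightly controlled $t$-degrees; tracking these through the determinant and through the division yields the bound.

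Granting the degree estimate, the rest is immediate. Set $N=\lfloor (c(K)-3)/2\rfloor$. By hypothesis $\pi(\widetilde\Delta_K^\chi(t))$ has degree exactly $2N$; since $\pi$ applied to a polynomial never increases its degree, this forces $\deg_t\widetilde\Delta_K^\chi(t)\ge 2N$, and combined with the estimate above we get $\deg_t\widetilde\Delta_K^\chi(t)=2N=\deg_t\pi(\widetilde\Delta_K^\chi(t))$. Now I would apply Proposition~\ref{prop:HKLreduction} with $d(t)=\widetilde\Delta_K^\chi(t)$: if $\widetilde\Delta_K^\chi(t)$ were a norm in $\Q(\xi_d)[t^{\pm1}]$, then $\pi(\widetilde\Delta_K^\chi(t))$ would factor in $\Z_s[t^{\pm1}]$ as a product of two polynomials of degree $N$ --- precisely what the hypothesis excludes. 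Hence $\widetilde\Delta_K^\chi(t)$ is not a norm, as claimed.

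The hard part is the degree estimate, and in particular obtaining the exact constant $2\lfloor(c(K)-3)/2\rfloor$ rather than a cruder bound. This requires identifying which entries of the twisted Fox Jacobian can carry a top-degree term in $t$, checking that no unexpected cancellation makes the determinant smaller or larger than predicted, and correctly accounting for the denominator $\det(\alpha_\chi(\mu_0)-I)$ --- whose degree is governed by $q$ --- together with the passage from the knot exterior to $M_K$, so that the final bound comes out independent of $q$ and with the stated floor (the parity of $c(K)$ entering only through that floor). I would emphasize that this estimate is needed only as an inequality and only once, for an arbitrary $c(K)$-crossing diagram; the matching lower bound and the non-factorization statement are then checked per knot --- for $K_{11},K_{17},K_{23}$ with $q=3$ --- by a finite computation, and the content of the proposition is that these finite checks together with the a priori bound suffice to invoke Theorem~\ref{thm:twistedalexsliceobstruction}.
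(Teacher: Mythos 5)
Your overall architecture matches the paper's: prove an a priori degree bound for $\widetilde\Delta_K^\chi(t)$ from the Fox--Wirtinger formula, observe that $\pi$ cannot raise degree so the hypothesis forces equality of degrees, and then apply Proposition~\ref{prop:HKLreduction}. However, there is a genuine gap in the ``key claim.'' You assert
\[
\deg_t \widetilde\Delta_K^\chi(t)\;\le\;2\left\lfloor\tfrac{c(K)-3}{2}\right\rfloor,
\]
but the degree count in the twisted Fox Jacobian does not give this. What it actually gives, as the paper verifies, is the weaker bound $\deg_t \widetilde\Delta_K^\chi(t)\le c(K)-3$: each $3\times 3$ block $\Phi(\partial r_i/\partial g_j)$ contains at most one entry of the form $\alpha t$, all such entries lie in the same row of that block, so each block-row of the $(c-1)\times(c-1)$ array of blocks contributes at most one power of $t$ to the determinant, giving numerator degree $\le c-1$; dividing by the degree-2 denominator $(t-1)\det(\phi_\chi(g_1))=(t-1)^2$ yields $\le c-3$. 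When $c(K)$ is even this is $2\lfloor(c-3)/2\rfloor+1$, strictly larger than what you claim, and there is no reason the crude count should give the sharper even number. You flag this as ``the hard part'' but do not supply an argument, and I don't believe one exists at this level of generality.

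The paper closes this gap with an observation you omit: a norm $ct^kf(t)\overline{f(t)}$ always has even degree. So one argues: either $\widetilde\Delta_K^\chi(t)$ is already not a norm (done), or it is a norm, in which case its degree is even and hence $\le 2\lfloor(c-3)/2\rfloor$; combined with $\deg\pi(\widetilde\Delta_K^\chi(t))=2\lfloor(c-3)/2\rfloor \le \deg\widetilde\Delta_K^\chi(t)$ this forces equality, and Proposition~\ref{prop:HKLreduction} then produces the forbidden factorization. You should insert this parity step rather than try to sharpen the degree count. Separately, your denominator $\det(\alpha_\chi(\mu_0)-I)$ is the Wada normalization for the knot exterior; the paper's reduced polynomial is for $M_K$ and has denominator $(t-1)\det(\phi_\chi(g_1))$ of degree $2$ (see Proposition~\ref{prop:computation}), which is what makes the numerator bound $c-1$ drop to $c-3$. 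Using the $X_K$ normalization would shift your count by one and does not match the statement being proved.
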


Here, degree is taken to be the degree of a Laurent polynomial -- \ie $\deg_{\max}-\deg_{\min}$. Proposition \ref{prop:reduction} is useful for efficient computations, since in our setting
$\det(\phi_\chi(g_1))=t-1$ and one can compute
\begin{align*} \label{eq:comppoly}
\pi\left(\widetilde{\Delta}_K^{\chi}(t)\right)= 
\frac{ \det\Big( \left[\pi \left( \Phi\left( \frac{\partial r_i}{\partial g_j}\right) \right)\right]_{i,j=2}^c\Big)}{(t-1)^2},
\end{align*}
in particular, computing determinants of matrices with entries in
$\Z_s[t^{\pm 1}]$ rather than in $\Q[\xi_d][t^{\pm1}]$.

\begin{proof}[Proof of Proposition~\ref{prop:reduction}]
By Proposition~\ref{prop:HKLreduction}, to establish our desired result under the above hypotheses it suffices to show that the degree of 
$\widetilde{\Delta}_K^{\chi}(t)$ is equal to $2 \lfloor\frac{c(K)-3}{2}\rfloor$, \ie that the reduced twisted Alexander polynomial does not drop degree under $\pi$. 
By considering Proposition~\ref{prop:computation} and recalling that we choose $\phi_\chi(g_1)$ to have determinant equal to $t-1$, we see that 
the degree of $\widetilde{\Delta}_K^{\chi}(t)$ is no more than $c(K)-3$ as follows. 

The degree of $\widetilde{\Delta}_K^{\chi}(t)$ is 2 less than the degree of  $ \det\Big( \left[\pi \left( \Phi\left( \frac{\partial r_i}{\partial g_j}\right) \right)\right]_{i,j=2}^c\Big)$. 
The Wirtinger presentation of  $\pi_1(X_K)$ has $c(K)$ generators and $c(K)$ relations of the form $r_i=g_{a_i} g_{b_i} g_{c_i}^{-1} g_{b_i}^{-1}$ for some $a_i,b_i,c_i$. Moreover, since $g_{a_i} g_{b_i} g_{c_i}^{-1} g_{b_i}^{-1}=1$ one can verify that
\[ \partial(g_{a_i} g_b g_c^{-1} g_b^{-1})
= \partial((g_{a_i} g_{b_i}) (g_{b_i} g_{c_i})^{-1}) 
= \partial(g_{a_i} g_{b_i})- \partial(g_{b_i} g_{c_i})
= \partial(g_{a_i}) +(g_{a_i}-1) \partial(g_b) - g_{b_i} \partial(g_{c_i}).\]
Therefore for any $i,j$ we have that  
 \begin{align*}
  \Phi\left( \frac{ \partial r_i }{\partial g_j}\right)=
\begin{cases}
\left[\begin{array}{ccc}
 1& 0 & 0 \\ 0 & 1 &0 \\ 0 & 0 & 1 \end{array}
\right]
 & \text{ if } j= a_i,\\
\vspace{-0.4cm}\\ 
\left[\begin{array}{ccc}
 0& 0 & t \\ 1 & 0 &0 \\ 0 & 1 & 0 \end{array}
\right]
\left[\begin{array}{ccc}
 \xi_d^{*} & 0 & 0 \\ 0 & \xi_d^{**} &0 \\ 0 & 0 & \xi_d^{***} \end{array}
\right]
-\left[\begin{array}{ccc}
1 & 0 & 0 \\ 0 &1 &0 \\ 0 & 0 & 1\end{array}
\right] & \text{ if } j= b_i,\\
\vspace{-0.4cm}\\ 
\left[\begin{array}{ccc}
 0& 0 & t \\ 1 & 0 &0 \\ 0 & 1 & 0 \end{array}
\right]
\left[\begin{array}{ccc}
 \xi_d^{*} & 0 & 0 \\ 0 & \xi_d^{**} &0 \\ 0 & 0 & \xi_d^{***} \end{array}
\right] & \text{ if }j=c_i,
\end{cases}
 \end{align*}
 and is the $3 \times 3$   zero matrix if $ j \not \in \{a_i, b_i, c_i\}$. 
In particular, $\Phi( \frac{ \partial r_i }{\partial g_j})$ has at most one entry which is of the form $\alpha t$ for $\alpha \in \Q(\xi_d)$ and all its other entries are elements of $\Q(\xi_d)$. 
It follows that the degree of $$ \det\Big( \left[\pi \left( \Phi\left( \frac{\partial r_i}{\partial g_j}\right) \right)\right]_{i,j=2}^c\Big)$$ is no more than $c(K)-1$ and so the degree of $\widetilde{\Delta}_K^{\chi}(t)$ is no more than $c(K)-3$.

Since polynomials of the form $f(t) \overline{f(t)}$ certainly have
even degrees, either $\widetilde{\Delta}_K^{\chi}(t)$ is not a norm, or
we have
\[
 2 \left \lfloor\frac{c(K)-3}{2} \right  \rfloor =  \deg \pi\left(\widetilde{\Delta}_K^{\chi}(t)\right)  \leq \deg \widetilde{\Delta}_K^{\chi}(t) \leq 2 \left \lfloor\frac{c(K)-3}{2}\right  \rfloor ,
\]
and hence we have equality throughout. 
\end{proof}

Table \ref{comptable} gives the degrees of the irreducible
factors of $\pi(\widetilde{\Delta}_{K_n}^{\chi_{\pm}}(t))$ over
$\Z_s[t^{\pm 1}]$. We refer the reader to Appendix~\ref{section:polycomp} for
exposition of the computational details. 

\def\arraystretch{1.2}
\begin{table}[h!]\label{tab:degrees}
\begin{tabular}{|c|c|c|c|c|}
\hline\vspace{-5mm}&&&&\\\
$n$ & $\pm$ & $s=kn+1$ & $\theta \in \Z_s$ & degree sequence of $\pi\left(\widetilde{\Delta}_{K_n}^{\chi_{\pm}}(t)\right)$\\
\hline
11 & + & 23 & 2 & (2,2,3,3,8)\\
& $-$ &  & 2 & (4,14) \\
\hline
17 & + & 103 & 8 & (2,3,9,16)\\
& $-$ &  & 9 & (2, 28) \\
\hline
23 & + & 47 & 4 & (1, 1,11,29)\\
& $-$ &  & 2 & (1, 1, 2, 12, 12, 14) \\
\hline
\end{tabular}
\vspace{2mm}
\caption{The degree sequences
 of $\pi(\widetilde{\Delta}_{K_n}^{\chi_{\pm}}(t))$.}
\label{comptable}
\end{table}

We are now ready to embark upon proving the main theorems of this paper. 

\begin{proof}[Proof of Theorem~\ref{thm:non-slice}]
Let $n \in \{11, 17, 23\}$ and let $K=K_n$. Let $r \colon X_K \to X_K$
denote the order $n$ symmetry of the knot exterior given in
Figure~\ref{fig:seif} by rotation by $2 \pi/ n$.  As discussed above,
$r$ extends to an order $n$ symmetry of $\Sigma_3(K)$ and induces a
covering transformation invariant, linking form preserving isomorphism
$r_* \colon H_1(\Sigma_3(K);\Z) \to H_1(\Sigma_3(K);\Z)$.  Let $P$ be
a covering transformation invariant metabolizer of
$H_1(\Sigma_3(K);\Z)$. By the discussion preceding
Equation~\eqref{eqn:orbitmet}, we see that either $P=P_+$ or there
exists some $k=0, \dots, n-1$ such that $P= r_*^k(P_-)$.

In the former case, let $\chi_+$ be the character defined in Equation~\eqref{eqn:chidef} and note that $\chi_+$ vanishes on $P=P_+$.  Moreover, the computations in Table~\ref{comptable}, the observation that $2 \lfloor\frac{c(K_n)-3}{2}\rfloor= 2 \lfloor\frac{2n-3}{2}\rfloor= 2(n-2)$, and  Proposition \ref{prop:reduction} together imply that $\tilde{\Delta}_K^{\chi^+}(t)$
does not factor as a norm over $\Q(\xi_n)[t^{\pm1}]$.

In the latter case, let $\chi_- \colon H_1(\Sigma_3(K);\Z) \to \Z_n$ be
the character defined in Equation~\eqref{eqn:chidef} that vanishes on
$P_-$.  Since $r_*^k(P_-)= P$, we have that $\chi:= \chi_- \circ
r_*^k$ vanishes on $P$. Moreover, since $r$ is a diffeomorphism of the
0-surgery, we have that
$\tilde{\Delta}_K^{\chi}(t)=\tilde{\Delta}_K^{\chi_-}(t)$. So again
the computations in Table~\ref{comptable} and Proposition
\ref{prop:reduction} imply that $\tilde{\Delta}_K^{\chi}(t)$ does not
factor as a norm over $\Q(\xi_n)[t^{\pm1}]$.

Therefore, for each invariant metabolizer of $H_1(\Sigma_3(K);\Z)$ we have constructed a character of prime power order vanishing on that metabolizer so that the corresponding twisted Alexander polynomial of $K$ is not a norm. By Theorem~\ref{thm:twistedalexsliceobstruction}, we conclude that $K$ is not slice. \end{proof}

Recall that $J=8_{17}\#8_{17}^r$.   Kirk and Livingston \cite{Kirk-Livingston:1999-2} proved $J$ is not slice by showing that for each invariant metabolizer of $P \leq H_1(\Sigma_3(K);\Z) \cong (\Z_{13})^4$ there exists a character $\chi \colon H_1(\Sigma_3(J); \Z ) \to \Z_{13}$ such that $\chi|_P=0$ and $\widetilde{\Delta}_K^{\chi}(t)\in \Q(\xi_{13})[t^{\pm1}]$ is not a norm. Now we are ready to prove our main theorem.

\begin{proof}[Proof of Theorem~\ref{thm:main2}]
For the duration of this proof we refer to $J$ as $K_{13}$, apologizing to the reader for the inconsistency in notation. 

Suppose that $$K= a_{7} K_7 \# a_{11} K_{11}\#a_{13} K_{13}  \# a_{17} K_{17} \#
a_{23} K_{23}
$$ is slice for $a_7,a_{13},a_{11},a_{17},a_{23} \in \{0,1\}$.
If $a_{11}=a_{13}=a_{17}=a_{23}=0$, then Sartori's work \cite{Sartori} implies that $a_7=0$, since $K_7$ is not slice. So we can assume that there exists $i_0 \in \{11, 13, 17, 23\}$ such that $a_{i_0} \neq 0$.

 Let
$$I:= \{ i \in \{7,11,13,17,23\} \mid a_i \neq 0 \}$$ and $P$ be an invariant metabolizer for $H_1(\Sigma_3(K);\Z)$.

Since \begin{equation*}\label{eqn:connsumdecomp}
H_1(\Sigma_3(K);\Z) \cong  \bigoplus_{i \in I} H_1(\Sigma_3(K_i); \Z) \cong\bigoplus_{i \in I}  \left( \Z_{i}\right)^4,
\end{equation*}
and $7, 11, 13, 17,$ and $23$ are relatively prime,   $P':=P \cap H_1(\Sigma_3(K_{i_0});\Z)$ is an invariant metabolizer for $H_1(\Sigma_3(K_{i_0});\Z)$.

Moreover, if $\chi'\colon H_1(\Sigma_3(K_{i_0}); \Z ) \to \Z_{i_0}$ is a character vanishing on $P'$, then we can construct a character $\chi$ vanishing on $P$ by decomposing $$H_1(\Sigma_3(K);\Z)\cong \bigoplus_{i \in I}  H_1(\Sigma_3(K_i); \Z)$$ and letting  \[ \chi|_{H_1(\Sigma_3(K_i);\Z)}= \begin{cases} \chi' & i=i_0 \\ 0 & i \neq i_0. \end{cases}\] Moreover, for such a character we have $\widetilde{\Delta}_K^{\chi}(t)= \widetilde{\Delta}_{K_{i_0}}^{\chi'}(t).$

It therefore suffices to show that for any invariant metabolizer of $H_1(\Sigma_3(K_{i_0});\Z)$ there exists a character $\chi'$ to $\Z_{i_0}$
vanishing on that metabolizer such that the resulting twisted Alexander polynomial $\widetilde{\Delta}_{K_{i_0}}^{\chi'}(t)$ does not factor as a norm over $\Q(\xi_{i_0})[t^{\pm1}]$. 

This is exactly what we did in the proof of Theorem~\ref{thm:non-slice} for $i_0=11, 17, 23$ and what Kirk and Livingston did in \cite{Kirk-Livingston:1999-2} for the case of $i_0=13$ thereby completing the proof. 
\end{proof}

\begin{appendix}
\section{Computation of twisted Alexander polynomials}\label{section:polycomp}

For the purpose of this argument, it is helpful to have the following naming conventions that are standard in this subfield. Given a knot $K$ in $S^3$ bounding a Seifert surface $F$, we write:
\begin{enumerate}
\item[]$\nu(K)$ to denote an open tubular neighborhood of $K$,
\item[]$\nu(F)$ to denote an open tubular neighborhood of $F$,
\item[]$X_K$ to denote $S^3\smallsetminus\nu(K)$,
\item[]$X^n_K$ to denote the $n$-fold cyclic cover of $X_K$, and
\item[]$X_F$ to denote $S^3\smallsetminus\nu(F)$.
\end{enumerate}

Given a character $\chi\colon H_1(\Sigma_3(K)) \to \Z_n$, we apply~\cite{HeraldKirkLivingston10} to obtain a
representation $$\phi_\chi \colon \pi_1(X_K) \to
GL(3, \Q(\xi_n)[t^{\pm1}])$$
as follows. 
Fix a basepoint $x_0$ in $X_F$ and let $\tilde{x}_0$ denote the lift of
$x_0$ to the $0^{th}$ copy of $S^3 \ssm \nu(F)$ in $X_K^3 \subset
\Sigma_3(K)$.  
Let $\epsilon \colon \pi_1(X_K) \to \Z$ be the canonical
abelianization map, and let $\mu_0$ be a preferred meridian of $K$ based at $x_0$. 
Given a simple closed curve $\gamma$ in $S^3 \ssm K$
based at $x_0$ and with $\lk(K, \gamma)=0$, we can obtain a well-defined
lift $\widetilde{\gamma}$ of $\gamma$ to $\Sigma_3(K)$, giving a map $$l \colon
\ker(\epsilon) \to H_1(\Sigma_3(K); \Z).$$ The map $l$ does not in general coincide with our previous method of converting elements of $H_1(S^3 \ssm \nu(F); \Z)$ to elements of $H_1(\Sigma_3(K); \Z)$, unless $\gamma$ is actually disjoint from $F$. In particular,  $l(\mu_0 g \mu_0^{-1})= t \cdot l(g)$
despite the fact that $\mu_0 g \mu_0^{-1}$ and $g$ certainly represent the same class in $H_1(S^3 \ssm \nu(F); \Z)$.

Our choice of $\mu_0$ allows us to define a map 
\begin{align*}
\phi \colon  \pi_1(X_K) &\to  \Z \ltimes H_1(\Sigma_3(K); \Z) \\
g & \mapsto  (t^{\epsilon(g)}, l(\mu_0^{-\epsilon(g)}g)),
\end{align*}
where  the product structure on $\Z \ltimes H_1(\Sigma_3(K); \Z)$ is given by 
\[
(t^{m_1}, x_1) \cdot (t^{m_2}, x_2)= (t^{m_1+m_2}, t^{-m_2} \cdot x_1+x_2).
\]
We then define $\phi_\chi= f_\chi \circ \phi$, where 
\begin{align}
f_{\chi}  \colon  \Z \ltimes H_1(\Sigma_3(K); \Z) & \to GL(3, \Q(\xi_n)[t^{\pm1}]) \nonumber \\ 
(t^m, x) & \mapsto \left[\begin{array}{ccc}
 0& 0 & t \\ 1 & 0 &0 \\ 0 & 1 & 0 \end{array}
\right]^m
\left[\begin{array}{ccc}
 \xi_n^{\chi(x)} & 0 & 0 \\ 0 & \xi_n^{\chi(t \cdot x)} &0 \\ 0 & 0 & \xi_n^{\chi(t^2 \cdot x)} \end{array}
\right]. \label{eqn:charactertorep}
\end{align}

Our basepoint $x$ for $S^3 \smallsetminus \nu(K)$ lies far below the
diagram, which we think of as lying almost in the plane of the
page. All of our curves are based at $x_0$, though as usual we
sometimes draw meridians to components of the knots as unbased curves,
with the understanding that they are based via the `go straight down to
the basepoint' path.
\begin{figure}[h]
\includegraphics[height=5cm]{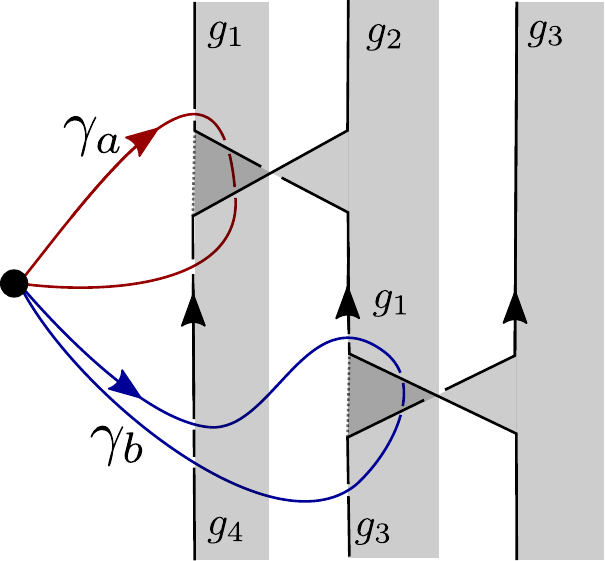}
\caption{Wirtinger generators $g_i$.} \label{fig:wirtingergen}
\end{figure}

 Let $\{g_i\}_{i=1}^{2n}$ be the Wirtinger generators for $\pi_1(X_K,x_0)$, some of which are illustrated in Figure~\ref{fig:wirtingergen}, and $\mu_0$ be the preferred meridian that represents $g_1$. In order to compute our desired twisted Alexander polynomials, we need to know $\phi_\chi(g_i)$ for all $i=1, \dots, 2n$. 
  Since $K$
is the closure of a 3-braid, once we specify the image of the three
top strand generators $g_1, g_2, g_3$ under $\phi_\chi$, the rest of
the computation is simple. In fact, since $g_2=g_1^{-1} g_4 g_1$, it suffices
to determine the image of $g_1, g_3,$ and $g_4$.

By considering Equation~\eqref{eqn:charactertorep}, we see that $\phi_\chi(g_i)$ is determined by the tuple
$$(*)_i:=\left(\chi(l(g_1^{-1} g_i)), \chi(t \cdot l(g_1^{-1} g_i)), \chi( t^2
\cdot l(g_1^{-1} g_i)\right).$$ We now describe $(*)_1, (*)_3$, and $(*)_4$, and use the above discussion to compute $\phi_{\chi}(g_i)$ for each Wirtinger generator $g_i$. We obtain immediately that
\[
(*)_1=
(\chi(l(g_1^{-1} g_1)), \chi(t \cdot l(g_1^{-1} g_1)), \chi( t^2 \cdot
l(g_1^{-1} g_1)))=(0,0,0) \in \mathbb{Z}_n^3.
\] Given a simple closed curve $\gamma$ based at $x_0$ and disjoint from
$F$, recall that we obtain a curve $\widetilde{\gamma}$ in
$\Sigma_3(K)$ by lifting $\gamma$ to our preferred copy of $S^3
\smallsetminus \nu(F)$.  As before, we let $a$ denote the homology
class of the lift of $\widehat{\alpha}_{n-1}$ and $b$ denote the
homology class of the lift of $\widehat{\beta}_{n-1}$ in $H_1(\Sigma_3(K); \Z)$. Let $\gamma_a$ be a simple closed curve that represents $g_1 g_4^{-1}$ and $\gamma_{-a}$ be its reverse, chosen to be disjoint from $F$ as in Figure~\ref{fig:wirtingergen}. 
 Then we have that $-a=\left[\widetilde{\gamma}_a\right] \in H_1(\Sigma_3(K); \Z) $ and 
$$a=\left[ \widetilde{\gamma}_{-a}\right]= l(g_4 g_1^{-1})= l(g_1
(g_1^{-1} g_4) g_1^{-1})= t \cdot l(g_1^{-1} g_4) \in H_1(\Sigma_3(K); \Z).$$
Therefore 
\begin{align*}
(*)_4=
(\chi(l(g_1^{-1} g_4)), \chi(t \cdot l(g_1^{-1} g_4)), \chi( t^2 \cdot l(g_1^{-1} g_4))) &=(\chi(t^{-1}\cdot a),\chi(a),\chi(t \cdot a))\\
&=(-\chi(a)-\chi(t \cdot a),\chi(a),\chi(t \cdot a)) \in \mathbb{Z}_n^3.
\end{align*} Similarly, let $\gamma_b$ be a simple closed curve that represents $g_4 g_3 g_1^{-1} g_4^{-1}$ and is disjoint from $F$, as in Figure~\ref{fig:wirtingergen}.
 So we have that
\begin{align*}
b= \left[ \widetilde{\gamma}_b \right]= l(g_4 g_3 g_1^{-1} g_4^{-1}) = t \cdot l (g_3 g_1^{-1})= t \cdot l(g_1 (g_1^{-1} g_3) g_1^{-1})=t^2 \cdot l(g_1^{-1} g_3) \in H_1(\Sigma_3(K); \Z).
\end{align*}
Hence 
\begin{align*}
(*)_3=
(\chi(l(g_1^{-1} g_3)), \chi(t \cdot l(g_1^{-1} g_3)), \chi( t^2 \cdot l(g_1^{-1} g_3))) &=(\chi(t \cdot b), \chi(t^2 \cdot b), \chi(b))\\
&=(\chi(t \cdot b),- \chi(b)- \chi(t \cdot b), \chi(b)) \in \mathbb{Z}_n^3.
\end{align*}
We can now straightforwardly compute $\phi_\chi(g_i)$ for the rest of the Wirtinger generators $g_i$.

The following well-known result (see \eg \cite{HeraldKirkLivingston10, Kirk-Livingston:1999-1}) reduces computation of twisted Alexander polynomials to Fox calculus and matrix algebra. 

\begin{prop}[{\cite[Section 9]{HeraldKirkLivingston10}}]\label{prop:computation}
Let $\pi_1(X_K) = \langle g_1, \dots, g_c: r_1, \dots , r_c \rangle$
be a Wirtinger presentation .  Assume that $\phi _{\chi} \colon
\pi_1(X_K) \to GL(q, \mathbb{F}[t^{\pm1}])$ is induced by a non-trivial
character $\chi$, and there is a natural extension $\Phi \colon
\Z[\pi_1(X_K)] \to M_q(\mathbb{F}[t^{\pm1}])$ where $M_q(\mathbb{F}[t^{\pm1}])$ is the set of $q$ by $q$ matrices with entries from $\mathbb{F}[t^{\pm1}]$. Then the reduced twisted
Alexander polynomial of $(K, \chi)$ is
\[\pushQED{\qed} \widetilde{\Delta}_K^{\chi}(t)= 
\frac{ \det\Big( \left[ \Phi\left( \frac{\partial r_i}{\partial g_j}\right) \right]_{i,j=2}^c\Big)}{(t-1)\det( \phi_\chi(g_1)).} \qedhere
\]
\end{prop}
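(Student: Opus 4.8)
The plan is to identify the twisted Alexander module $\mathcal A^{\alpha_\chi}(K)=H_1(M_K;\mathbb{F}[t^{\pm1}]^q)$ with the first homology of an explicit finite free chain complex assembled from the Wirtinger presentation, and then to extract a generator of its order ideal by a Cramer's-rule computation with the Fox Jacobian; this is essentially Wada's invariant together with Kitano's theorem identifying Wada's invariant with Reidemeister torsion, specialized to the present normalization, so a careful writeup can simply quote \cite[Section~9]{HeraldKirkLivingston10}. First I would invoke the standard fact that in a Wirtinger presentation $\langle g_1,\dots,g_c\mid r_1,\dots,r_c\rangle$ of a knot group any one relator follows from the others, so deleting $r_1$ produces a deficiency-one presentation whose presentation $2$-complex $Y$ is homotopy equivalent to the aspherical space $X_K$. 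Applying $\phi_\chi$ and passing to the universal cover, the twisted cellular chain complex of $Y$ reads
\[
0 \longrightarrow \big(\mathbb{F}[t^{\pm1}]^q\big)^{c-1} \xrightarrow{\ \partial_2\ } \big(\mathbb{F}[t^{\pm1}]^q\big)^{c} \xrightarrow{\ \partial_1\ } \mathbb{F}[t^{\pm1}]^q \longrightarrow 0,
\]
where the $q\times q$ block of $\partial_1$ in slot $j$ is $\phi_\chi(g_j)-I$ and $\partial_2=\big[\,\Phi(\partial r_i/\partial g_j)\,\big]_{2\le i\le c,\ 1\le j\le c}$, the identity $\partial_1\partial_2=0$ being the fundamental formula of Fox calculus. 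Thus $H_1(X_K;\mathbb{F}[t^{\pm1}]^q)=\ker\partial_1/\operatorname{im}\partial_2$ and $H_0=\operatorname{coker}\partial_1$.

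Next I would pass from the knot exterior to the $0$-surgery. The longitude $\lambda$ is null-homologous, so $\epsilon(\lambda)=0$, and since its lift to $\Sigma_q(K)$ bounds a lifted Seifert surface one has $\phi_\chi(\lambda)=I$; hence $\phi_\chi$ descends to $\pi_1(M_K)$, and $M_K$ is obtained from $X_K$ by attaching a $2$-handle along $\lambda$ together with a $3$-handle. The $3$-handle affects only $H_2$ and $H_3$, while the $2$-handle quotients $H_1$ by the cyclic submodule generated by $[\lambda]$; because $[\mu_0,\lambda]=1$ in $\pi_1(X_K)$ this submodule is annihilated by $\det(\phi_\chi(\mu_0)-I)$, which for the meridian $g_1=\mu_0$ is a unit times $t-1$ by a one-line determinant from \eqref{eqn:charactertorep}. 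Tracking this identifies $\mathcal A^{\alpha_\chi}(K)$ with $H_1(X_K;\mathbb{F}[t^{\pm1}]^q)$ modulo a submodule of order $\doteq t-1$.

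The computational heart is reading off the order ideal. Since $\chi$ is nontrivial, the block $\phi_\chi(g_1)-I$ of $\partial_1$ is invertible over the fraction field $\mathbb{F}(t)$, so eliminating the $g_1$-coordinate by Cramer's rule presents $\ker\partial_1$ as a free $\mathbb{F}[t^{\pm1}]$-module of rank $q(c-1)$; pulling $\partial_2$ back to these generators yields a square $q(c-1)\times q(c-1)$ presentation matrix for $H_1(X_K;\mathbb{F}[t^{\pm1}]^q)$ whose determinant is exactly $\det\big(\big[\Phi(\partial r_i/\partial g_j)\big]_{i,j=2}^{c}\big)$. Dividing by the order of $H_0$, by the order of the $[\lambda]$-submodule, and absorbing the residual unit $\det(\phi_\chi(g_1))$ built into the torsion normalization, produces the asserted formula up to units. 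Along the way one needs $H_2(X_K;\mathbb{F}[t^{\pm1}]^q)=0$: this module is a submodule of the free module $\big(\mathbb{F}[t^{\pm1}]^q\big)^{c-1}$ over the PID $\mathbb{F}[t^{\pm1}]$, and it is torsion whenever the polynomial in question is nonzero, the only case of interest, hence it vanishes.

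I expect the genuine obstacle to be the normalization bookkeeping rather than any single conceptual hurdle: matching the order ideal of $H_1(M_K;\mathbb{F}[t^{\pm1}]^q)$ from Definition~\ref{def:twstpoly} with the Fox-calculus expression requires carefully tracking (i) the two handle attachments turning the exterior $X_K$ into the closed manifold $M_K$ and their effect on orders, and (ii) the pervasive $\pm t^k$ and $\det(\phi_\chi(g_1))$ unit ambiguities intrinsic to Reidemeister torsion and to Fox calculus. These are precisely the points handled in Kitano's identification of Wada's invariant with Reidemeister torsion and in \cite[Section~9]{HeraldKirkLivingston10}, which a streamlined proof may simply cite.
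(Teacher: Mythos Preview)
The paper does not give a proof of this proposition at all: it is stated with attribution to \cite[Section~9]{HeraldKirkLivingston10} and closed with a \qed\ box, so there is no ``paper's own proof'' to compare against. Your sketch is a correct outline of the standard derivation---building the twisted chain complex of the presentation $2$-complex, identifying the Fox Jacobian with $\partial_2$, passing from $X_K$ to $M_K$ via the longitude $2$-handle, and tracking the $(t-1)$ and $\det(\phi_\chi(g_1))$ normalizations---and is considerably more detailed than what the paper records.

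One small caution: your justification that the submodule generated by $[\lambda]$ has order dividing $t-1$ via ``$[\mu_0,\lambda]=1$ implies annihilation by $\det(\phi_\chi(\mu_0)-I)$'' is a bit telegraphic; the commutation relation alone does not immediately give an annihilation statement for a single element of a twisted module. The cleaner route, which you gesture at, is to note that $\phi_\chi(\lambda)=I$ and then compare $H_1(X_K;\mathbb{F}[t^{\pm1}]^q)$ and $H_1(M_K;\mathbb{F}[t^{\pm1}]^q)$ directly via the Mayer--Vietoris or long exact sequence of the pair, where the $(t-1)$ factor arises from $H_0$ of the boundary torus. Since you already propose citing \cite{HeraldKirkLivingston10} for this bookkeeping, this is not a gap so much as a place where your sketch would benefit from one more line of detail if you wanted it to stand alone.
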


The following computations of the  irreducible factors  of the polynomials $\pi(\tilde{\Delta}_K^{\chi^{\pm}}(t)) \in \Z_s[t^{\pm1}]$ were done in Maple worksheets that are available on the third author's website.

\begin{align*}
\begin{array}{cc}
(n, \pm, s, \theta) &\text{Irreducible factors}\\
\hline \hline 
 \bf{(11, -, 23, 2)} \\
  \text{degree }4: &  t^4 + 17 t^3 + 4 t^2 + 17 t + 1 \\
   \text{degree }14: &  t^{14} + 7 t^{13} + 5 t^{12} + 7 t^{11} + 7 t^{10} + 22 t^9+ 22 t^8 + 7 t^7  \\
   &+ 22 t^6 + 22 t^5 + 7 t^4 + 7 t^3 + 5 t^2 + 7 t + 1 \\
   \hline 
   \bf{(11, +, 23, 2)}\\
 \text{degree }2: &  t^2 + 13 t + 1, \,   t^2+3t+11\\
 \text{degree }3: &  t^3 + 14 t^2 + 3,  \, t^3 + 22 t^2 + 22 t + 22 \\
 \text{degree }8: & t^8 + 22 t^7 + 4 t^6 + 14 t^5 + 3 t^4 + 3 t^3 + 16 t^2 + t + 20 
 \end{array}
 \end{align*}

\begin{align*}
\begin{array}{cc}
(n, \pm, s, \theta) &\text{Irreducible factors}\\
\hline \hline 
\bf{(17, +, 103, 8)}\\
    \text{degree }2: & t^2 + 98 t + 5\\
   \text{degree }3: &  t^3 + 12 t^2 + 36 t + 93\\
  \text{degree }9: & t^9 + 33 t^8 + 94 t^7 + 32 t^6 + 61 t^5 + 20 t^4 + 63 t^3  + 48 t^2 + 19 t + 94\\
   \text{degree }16: & t^{16} + 74 t^{15} + 26 t^{14} + 92 t^{13} + 31 t^{12} + 85 t^{11} + 86 t^{10} +  34 t^9 + 35 t^8 \\
 & + 67 t^7 + 99 t^6+64 t^5 + 67 t^4 + 11 t^3 + 95 t^2  + 8 t + 19 \\
\hline
\bf{(17, -, 103, 9)}\\
    \text{degree }2:  & t^2 + 13 t + 1 \\
  \text{degree }28:  & t^{28} + 61 t^{27} + 97 t^{26} + 22 t^{25} + 25 t^{24} + 
27 t^{23} + 73 t^{22} + 47 t^{21} + 79 t^{20} + 31 t^{19} \\
&  + 99 t^{18} + 36 t^{17} + 54 t^{16}  + 40 t^{15} + 40 t^{14} + 40 t^{13} + 54 t^{12} + 36 t^{11}+  99 t^{10} \\
&  + 31 t^9 + 79 t^8 + 47 t^7 + 73 t^6 + 27 t^5 + 25 t^4 + 22 t^3 + 97 t^2 + 61 t + 1 \\
\hline \hline 
\bf{(23, +, 47, 4)}\\
      \text{degree }1:  & t+ 21, \, t + 29 \\
    \text{degree }11: & t^{11} + 37 t^{10} + 43 t^9 + 5 t^8 + t^7 + 42 t^6 + 34 t^5 + 43 t^4 + 5 t^3 + 34 t^2 + 44 t + 9\\
    \text{degree }29: & t^{29} + 25 t^{28} +  9 t^{27} + 19 t^{26} + 38 t^{25} + 46 t^{24} + 27 t^{23} + 40 t^{22}+ 41 t^{21} + 18 t^{20} \\
& + 17 t^{19}  + t^{18} +  34 t^{17}+  6 t^{16} + 21 t^{15} + 25 t^{14} +  18 t^{13}+ 25 t^{12} + 34 t^{11} + 9 t^{10} \\ 
& + 12 t^9  + 41 t^8 + 46 t^7  
+ 10 t^6+ 40 t^5 + 21 t^4 + 10 t^3 + t^2 + 40 t + 13 \\
\hline
\bf{(23, -, 47, 2)}\\
     \text{degree }1: & t + 46,t+46 \\
    \text{degree }2:  &  t^2 + t + 1  \\
    \text{degree } 12: & t^{12} + 3 t^{11} + 27 t^{10} + 19 t^9 + 38 t^8 + 25 t^7 + 25 t^6 + 40 t^5  + 16 t^4+ 25 t^3  \\
    &+ 44 t^2 + 28 t + 23, \, \,  t^{12} + 38 t^{11} + 6 t^{10} +  44 t^9 + 15 t^8 + 14 t^7 + 44 t^6 \\
    &+ 44 t^5 + 18 t^4 + 9 t^3 + 40 t^2 + 41 t + 45 \\
    \text{degree }14: & t^{14} + 2 t^{13} + 2 t^{12} + 43 t^{11} + 
42 t^{10} + 36 t^9 + 30 t^8 + 33 t^7 + 30 t^6 \\
&+ 36 t^5 + 42 t^4 + 43 t^3 + 2 t^2 + 2 t + 1
\end{array}
\end{align*}
\end{appendix}

\bibliography{biblio}

\begin{thebibliography}{10}

\bibitem{banff-questions}
Problem list.
\newblock In {\em Synchronizing smooth and topological 4-manifolds, Banff
  International Research Station}. Available at
  \url{https://www.birs.ca/workshops/2016/16w5145/report16w5145.pdf}, February
  2016.

\bibitem{bonn-questions}
Problem list.
\newblock In {\em Conference on 4-manifolds and knot concordance, Max Planck
  Institute for Mathematics}. Available at
  \url{http://people.brandeis.edu/~aruray/4manifoldsconference/problem\_session.pdf},
  October 2016.

\bibitem{Alfieri-Kang-Stipsicz:2019-1}
A.~Alfieri, S.~Kang, and A.~I. Stipsicz.
\newblock Connected floer homology of covering involutions.
\newblock ar{X}iv:1905.11613, 2019.

\bibitem{Blanchfield:1957-1}
R.~Blanchfield.
\newblock Intersection theory of manifolds with operators with applications to
  knot theory.
\newblock {\em Ann. of Math. (2)}, 65:340--356, 1957.

\bibitem{Casson-Gordon:1978-1}
A.~Casson and C.~Gordon.
\newblock On slice knots in dimension three.
\newblock In {\em Algebraic and geometric topology ({P}roc. {S}ympos. {P}ure
  {M}ath., {S}tanford {U}niv., {S}tanford, {C}alif., 1976), {P}art 2}, Proc.
  Sympos. Pure Math., XXXII, pages 39--53. Amer. Math. Soc., Providence, R.I.,
  1978.

\bibitem{Cha}
J.~Cha.
\newblock The structure of the rational concordance group of knots.
\newblock {\em Mem. Amer. Math. Soc.}, 189(885):x+95, 2007.

\bibitem{ColKirLiv_15}
J.~Collins, P.~Kirk, and C.~Livingston.
\newblock The concordance classification of low crossing number knots.
\newblock {\em Proc. Amer. Math. Soc.}, 143(10):4525--4536, 2015.

\bibitem{Dai-Hedden-Mallick:2020-1}
I.~Dai, M.~Hedden, and A.~Mallick.
\newblock Corks, involutions, and heegaard floer homology.
\newblock ar{X}iv:2002.02326, 2020.

\bibitem{DHST}
I.~Dai, J.~Hom, M.~Stoffregen, and L.~Truong.
\newblock More concordance homomorphisms from knot {Floer} homology.
\newblock ar{X}iv:1902.03333, to appear in Geom. Topol., 2019.

\bibitem{FoxMilnor}
R.~Fox and J.~Milnor.
\newblock Singularities of {$2$}-spheres in {$4$}-space and cobordism of knots.
\newblock {\em Osaka Math. J.}, 3:257--267, 1966.

\bibitem{Freedman:1982-1}
M.~Freedman.
\newblock The topology of four-dimensional manifolds.
\newblock {\em J. Differential Geometry}, 17(3):357--453, 1982.

\bibitem{Freedman-Quinn:1990-1}
M.~Freedman and F.~Quinn.
\newblock {\em Topology of 4-manifolds}, volume~39 of {\em Princeton
  Mathematical Series}.
\newblock Princeton University Press, Princeton, NJ, 1990.

\bibitem{Friedl:2003-1}
S.~Friedl.
\newblock {\em Eta invariants as sliceness obstructions and their relation to
  {C}asson-{G}ordon invariants}.
\newblock ProQuest LLC, Ann Arbor, MI, 2003.
\newblock Thesis (Ph.D.)--Brandeis University.

\bibitem{Friedl-Powell:2017-1}
S.~Friedl and M.~Powell.
\newblock A calculation of {B}lanchfield pairings of 3-manifolds and knots.
\newblock {\em Mosc. Math. J.}, 17(1):59--77, 2017.

\bibitem{friedlvidussi}
S.~Friedl and S.~Vidussi.
\newblock A survey of twisted {A}lexander polynomials.
\newblock In {\em The mathematics of knots}, volume~1 of {\em Contrib. Math.
  Comput. Sci.}, pages 45--94. Springer, Heidelberg, 2011.

\bibitem{Greene-Jabuka:2011-1}
J.~Greene and S.~Jabuka.
\newblock The slice-ribbon conjecture for $3$-stranded pretzel knots.
\newblock {\em Amer. J. Math.}, 133(3):555--580, 2011.

\bibitem{HeraldKirkLivingston10}
C.~Herald, P.~Kirk, and C.~Livingston.
\newblock Metabelian representations, twisted {A}lexander polynomials, knot
  slicing, and mutation.
\newblock {\em Math. Z.}, 265(4):925--949, 2010.

\bibitem{Hom:2014-1}
J.~Hom.
\newblock The knot {F}loer complex and the smooth concordance group.
\newblock {\em Comment. Math. Helv.}, 89(3):537--570, 2014.

\bibitem{Hom-Wu:2016-1}
J.~Hom and Z.~Wu.
\newblock Four-ball genus bounds and a refinement of the
  {O}zv\'{a}th-{S}zab\'{o} tau invariant.
\newblock {\em J. Symplectic Geom.}, 14(1):305--323, 2016.

\bibitem{Jabuka:2012-1}
S.~Jabuka.
\newblock Concordance invariants from higher order covers.
\newblock {\em Topology Appl.}, 159(10-11):2694--2710, 2012.

\bibitem{kawauchi}
A.~Kawauchi.
\newblock Rational-slice knots via strongly negative-amphicheiral knots.
\newblock {\em Commun. Math. Res.}, 25(2):177--192, 2009.

\bibitem{Kearton:1975-1}
C.~Kearton.
\newblock Blanchfield duality and simple knots.
\newblock {\em Trans. Amer. Math. Soc.}, 202:141--160, 1975.

\bibitem{Kim-Livingston:2018-1}
S.-G. Kim and C.~Livingston.
\newblock Secondary upsilon invariants of knots.
\newblock {\em Q. J. Math.}, 69(3):799--813, 2018.

\bibitem{Kim-Livingston:2019-1}
T.~Kim and C.~Livingston.
\newblock Knot reversal acts non-trivially on the concordance group of
  topologically slice knots.
\newblock ar{X}iv:1904.12014, 2019.

\bibitem{Kirk-Livingston:1999-1}
P.~Kirk and C.~Livingston.
\newblock Twisted {A}lexander invariants, {R}eidemeister torsion, and
  {C}asson-{G}ordon invariants.
\newblock {\em Topology}, 38(3):635--661, 1999.

\bibitem{Kirk-Livingston:1999-2}
P.~Kirk and C.~Livingston.
\newblock Twisted knot polynomials: inversion, mutation and concordance.
\newblock {\em Topology}, 38(3):663--671, 1999.

\bibitem{Kirk-Livingsont:2001-1}
P.~Kirk and C.~Livingston.
\newblock Concordance and mutation.
\newblock {\em Geom. Topol.}, 5:831--883, 2001.

\bibitem{Levine:1977-1}
J.~Levine.
\newblock Knot modules. {I}.
\newblock {\em Trans. Amer. Math. Soc.}, 229:1--50, 1977.

\bibitem{Lisca:2007-1}
P.~Lisca.
\newblock Lens spaces, rational balls and the ribbon conjecture.
\newblock {\em Geom. Topol.}, 11:429--472, 2007.

\bibitem{lisca}
P.~Lisca.
\newblock On 3-braid knots of finite concordance order.
\newblock {\em Trans. Amer. Math. Soc.}, 369(7):5087--5112, 2017.

\bibitem{Livingston:2002-1}
C.~Livingston.
\newblock Seifert forms and concordance.
\newblock {\em Geom. Topol.}, 6:403--408, 2002.

\bibitem{Long:1984-1}
D.~D. Long.
\newblock Strongly plus-amphicheiral knots are algebraically slice.
\newblock {\em Math. Proc. Cambridge Philos. Soc.}, 95(2):309--312, 1984.

\bibitem{Manolescu-Owens:2007-1}
C.~Manolescu and B.~Owens.
\newblock A concordance invariant from the {F}loer homology of double branched
  covers.
\newblock {\em Int. Math. Res. Not. IMRN}, (20):Art. ID rnm077, 21, 2007.

\bibitem{Milnor:1968-1}
J.~Milnor.
\newblock Infinite cyclic coverings.
\newblock In {\em Conference on the {T}opology of {M}anifolds ({M}ichigan
  {S}tate {U}niv., {E}. {L}ansing, {M}ich., 1967)}, pages 115--133. Prindle,
  Weber \& Schmidt, Boston, Mass., 1968.

\bibitem{murasugiperiodic}
K.~Murasugi.
\newblock On periodic knots.
\newblock {\em Comment. Math. Helv.}, 46:162--174, 1971.

\bibitem{Ozsvath-Szabo:2003-1}
P.~Ozsv\'{a}th and Z.~Szab\'{o}.
\newblock Knot {F}loer homology and the four-ball genus.
\newblock {\em Geom. Topol.}, 7:615--639, 2003.

\bibitem{Ozsvath-Stipsicz-Szabo:2017-1}
P.~S. Ozsv\'{a}th, A.~I. Stipsicz, and Z.~Szab\'{o}.
\newblock Concordance homomorphisms from knot {F}loer homology.
\newblock {\em Adv. Math.}, 315:366--426, 2017.

\bibitem{Rasmussen:2010-1}
J.~Rasmussen.
\newblock Khovanov homology and the slice genus.
\newblock {\em Invent. Math.}, 182(2):419--447, 2010.

\bibitem{Sartori}
A.~Sartori.
\newblock Knot concordance in three manifolds.
\newblock Master thesis, University of Pisa, 2010.

\end{thebibliography}
\bibliographystyle{abbrv}

\end{document}